\newtheorem{theorem}{Theorem}[section]
\newtheorem{lemma}[theorem]{Lemma}
\newtheorem{proposition}[theorem]{Proposition}
\newtheorem{corollary}[theorem]{Corollary}
\newtheorem{definition}[theorem]{Definition}
\newtheorem{fact}[theorem]{Fact}
\newenvironment{proof}
{\begin{trivlist}  \item \textsc{Proof:}~} {\hfill $\Box$
\end{trivlist}}
\newenvironment{proof of claim}
{\begin{trivlist}  \item \textsc{Proof of Claim:}~} {\hfill $\Box$
\end{trivlist}}
\newenvironment{theoremno}
{\begin{trivlist} \item \textbf{Theorem:}~}{\end{trivlist}}
\newenvironment{propositionno}
{\begin{trivlist} \item \textbf{Proposition:}~}{\end{trivlist}}
\newcommand{\closure}[1]{\ensuremath{\mathrm{cl}}(#1)}
\newcommand{\interior}[1]{\ensuremath{\mathrm{int}}(#1)}
\def \diam{\operatorname{Diam}}
\def \st {\operatorname{st}}
\def \Q{\mathbb{Q}}
\def \R{R}
\def \O{\mathcal{O}}
\def \bR{\mathbb{R}}
\def\Ind#1#2{#1\setbox0=\hbox{$#1x$}\kern\wd0\hbox to 0pt{\hss$#1\mid$\hss}
\lower.9\ht0\hbox to 0pt{\hss$#1\smile$\hss}\kern\wd0}
\def\Notind#1#2{#1\setbox0=\hbox{$#1x$}\kern\wd0\hbox to 0pt{\mathchardef
\nn=12854\hss$#1\nn$\kern1.4\wd0\hss}\hbox to
0pt{\hss$#1\mid$\hss}\lower.9\ht0 \hbox to
0pt{\hss$#1\smile$\hss}\kern\wd0}
\newcommand{\ma}{\mathfrak{m}}
\newcommand{\hdim}{\dim_{\mathcal{H}}}
\begin{document}

\title{The Hausdorff dimension of metric spaces definable in o-minimal expansions of the real field\footnote{This work was partially supported by a grant from the Simons Foundation (Grant 318364 to Jana Ma\v{r}\'{i}kov\'{a})}}
\author{Jana Ma\v{r}\'ikov\'{a} and Erik Walsberg}

\AtEndDocument{\bigskip{\footnotesize%
  \textsc{Department of Mathematics, Western Illinois University, Macomb, Illinois} \par  
  \textit{E-mail address}: \texttt{j-marikova@wiu.edu} \par
  \addvspace{\medskipamount}
  \textsc{Institute of Mathematics, Hebrew University of Jerusalem, Givat Ram, Jerusalem} \par
  \textit{E-mail address}: \texttt{erikw@math.ucla.edu}
}}

\maketitle

\begin{abstract}
Let $R$ be an o-minimal expansion of the real field.
We show that the Hausdorff dimension of an $\R$-definable metric space is an $\R$-definable function of the parameters defining the metric space.
We also show that the Hausdorff dimension of an $\R$-definable metric space is an element of the field of powers of $\R$.
The proof uses a basic topological dichotomy for definable metric spaces due to the second author, and the work of the first author and Shiota on measure theory over nonarchimedean o-minimal structures.
\end{abstract}

\begin{section}{Introduction}
Let $R$ be an o-minimal expansion of $\overline{\mathbb{R}} = (\mathbb R, <, +, \times )$, i.e. of the real field.
Throughout, by ``definable'' we mean ``definable in $R$ (possibly with parameters)'', unless stated otherwise.  If $M$ is an expansion of an ordered abelian group, then we write $M^>$ instead of $M^{>0}$, and $M^{\geq }$ instead of $M^{\geq 0}$.
We let $\Lambda$ be the \textbf{field of powers} of $\R$, i.e. the set of $r \in \bR$ such that $t^r$ is a definable function of $t \in \bR$.
Given a definable set $A$ we let $\dim(A)$ be the o-minimal dimension of $A$.
This agrees with the topological dimension of $A$.  We use the term topological dimension here in the sense of Hieronymi, Miller \cite{millerpersonal} (see Introduction of \cite{millerpersonal}), where it refers to small inductive dimension.
A \textbf{definable metric space} is a definable set $X$ equipped with a definable metric $d: X^2 \rightarrow \bR^>$.
Some basic facts about definable metric spaces were established in \cite{erikthesis}.

The Hausdorff dimension of a definable set agrees with its topological dimension.
In fact, it was recently shown in \cite{millerpersonal} that this holds for closed sets in any model-theoretically tame first order expansion of the real field:
\begin{theoremno}\label{theorem:hm}
Let $\mathcal M$ be an expansion of the real field which does not define the integers and let $X \subseteq \bR^n$ be a closed $\mathcal M$-definable set.
Then the Hausdorff dimension and the topological dimension of $X$ agree and equal the largest $m$ for which there is a coordinate projection $\mathbb{R}^n \rightarrow \mathbb{R}^m$ which maps $X$ onto a set with nonempty interior.
\end{theoremno}
The above theorem implies that the Hausdorff dimension of a closed $\mathcal M$-definable set is a natural number, and is furthermore a definable function of the parameters defining the set.
In contrast there are $R$-definable metric spaces with fractional Hausdorff dimension.
Let $0 < r < 1$ be an element of $\Lambda$.
Then $d(x,y) = |x - y|^r$ is a definable metric on $[0,1]$ with Hausdorff dimension $\frac{1}{r}$.
Varying $r$ produces an $\overline{\mathbb{R}}_{\exp}$-definable family of metric spaces whose elements have infinitely many distinct Hausdorff dimensions.
There are interesting examples of semialgebraic metric spaces, coming from  nilpotent geometry, with Hausdorff dimension strictly greater then topological dimension.
The simplest example is the Heisenberg group.
The Heisenberg group $H$ is the group of matrices of the form:
\[\begin{pmatrix}
 1 & x & z \\
  0 & 1 & y \\
  0 & 0 & 1
\end{pmatrix} \quad \text{for } x,y,z \in \bR.\]
We equip $H$ with a norm $\| \|_H$ by declaring the norm of the matrix above to be $[x^4+y^4+z^2]^{\frac{1}{4}}$.
We equip $H$ with a left-invariant metric $d_H$ by declaring $d_H(A,B) = \|A^{-1}B\|_H$ for all $A,B \in H$.
The Heisenberg group has topological dimension three and the Hausdorff dimension of $(H,d_H)$ is four.
The Heisenberg group is the simplest non-abelian \textbf{Carnot Group}.
Any Carnot group can be equipped with a left-invariant semialgebraic metric in a manner similar to the Heisenberg group.
If the Carnot group is non-abelian, then the Hausdorff dimension of the associated metric space is strictly greater then the topological dimension of the Carnot group.
See 5.5 of \cite{erikthesis} for more information and references.
Subriemannian metrics give more examples of interesting $\overline{\mathbb{R}}_{an}$-definable metric spaces whose Hausdorff dimension is strictly larger then topological dimension, see 5.9 of \cite{erikthesis}.
In this paper we prove the following:
\begin{theorem}\label{theorem:main}
Let $\mathcal{X} = \{ (X_\alpha,d_\alpha) : \alpha \in \bR^l \}$ be a definable family of metric spaces.
Then the Hausdorff dimension of $(X_\alpha,d_\alpha)$ is a definable function of $\alpha$ taking values in $\Lambda\cup \{\infty\}$. 
If $R$ is polynomially bounded then the Hausdorff dimension of the elements of $\mathcal{X}$ takes only finitely many values.
\end{theorem}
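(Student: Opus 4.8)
The plan is to reduce the computation of $\hdim(X_\alpha,d_\alpha)$ to extracting a single asymptotic exponent from a definable one-variable function, and then to show that such an exponent always lies in $\Lambda\cup\{\infty\}$ and varies definably in $\alpha$. First I would apply the topological dichotomy for definable metric spaces to control the local geometry: after a definable decomposition of $X_\alpha$, each piece is either negligible for Hausdorff dimension or carries a metric whose behaviour near a point is comparable, in a uniformly definable way, to a warped Euclidean metric supporting a natural definable measure $\mu_\alpha$. Since Hausdorff dimension is the supremum of the dimensions of the (finitely many) pieces and does not increase under definable Lipschitz maps, it is enough to treat one such piece, on which I may also assume $X_\alpha$ is $d_\alpha$-bounded after splitting off the part near infinity (which either contributes nothing new or forces the value $\infty$).

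On such a piece the dimension can be read off as the asymptotic exponent of a definable function of a single real variable. Two natural candidates are the covering function
\[
N_\alpha(\delta)=\min\bigl\{\,k : X_\alpha\text{ is covered by }k\text{ closed }d_\alpha\text{-balls of radius }\delta\,\bigr\}
\]
and the ball volume $r\mapsto \mu_\alpha\bigl(B_{d_\alpha}(x,r)\bigr)$, both definable in the displayed parameters. The heart of the matter is to show that for a definable metric space the Hausdorff dimension coincides with the box-counting dimension,
\[
\hdim(X_\alpha,d_\alpha)=\lim_{\delta\to 0^+}\frac{\log N_\alpha(\delta)}{-\log\delta},
\]
the limit existing in $[0,\infty]$ by o-minimality. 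The easy inequality $\hdim\le\dim_{\mathrm{box}}$ holds in any metric space; the reverse inequality I would obtain from an Ahlfors-type regularity estimate $\mu_\alpha\bigl(B_{d_\alpha}(x,r)\bigr)\asymp r^{s}$ together with the mass distribution principle. This is the point at which the measure theory over the nonarchimedean extension $\re$ of Ma\v{r}\'{i}kov\'{a} and Shiota enters: transferring the balls to $\re$ and computing the valuation of the volume of an infinitesimal ball both yields the regularity estimate and exhibits $s$ as the exponent of a definable function, all uniformly in $\alpha$.

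It then remains to identify this exponent and verify definability, which I would do by Miller's dichotomy. If $\R$ is polynomially bounded, the structure theory of definable functions gives $N_\alpha(\delta)\sim c\,\delta^{-r}$ with $r\in\Lambda$, so the limit equals $r\in\Lambda$; the uniform finiteness of exponents occurring in a definable family shows that only finitely many values $r$ arise as $\alpha$ varies, and each level set $\{\alpha:\hdim(X_\alpha,d_\alpha)=r\}$ is defined by the first-order condition $N_\alpha(\delta)\asymp\delta^{-r}$, which is legitimate since $t\mapsto t^{r}$ is definable. If $\R$ defines $\exp$, then $\Lambda=\bR$, so membership in $\Lambda$ is automatic, $\log$ is definable, the function $\delta\mapsto \log N_\alpha(\delta)/(-\log\delta)$ is definable, and both the existence of its limit and the definability of the graph $\{(\alpha,s):\hdim(X_\alpha,d_\alpha)=s\}$ follow directly; the value $\infty$ occurs exactly when this limit diverges. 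In either case $\alpha\mapsto\hdim(X_\alpha,d_\alpha)$ is definable with values in $\Lambda\cup\{\infty\}$, and takes finitely many values when $\R$ is polynomially bounded.

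The step I expect to be the main obstacle is the identification of Hausdorff dimension with the box-counting exponent, specifically the lower bound $\hdim\ge\dim_{\mathrm{box}}$. Ruling out economical covers by sets of wildly varying diameters forces one to produce a genuine regularity estimate for $d_\alpha$-balls, and establishing this estimate via the nonarchimedean volume theory while keeping every construction uniform in the parameter $\alpha$---so that the resulting function of $\alpha$ is definable rather than merely definable on each fibre---is where the real work lies.
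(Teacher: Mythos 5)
Your overall strategy (dichotomy, reduction to a Euclidean-topology piece, extraction of an asymptotic exponent via the nonarchimedean volume theory, Miller's dichotomy at the end) is aligned with the paper's, but two of your central steps have genuine gaps. First, the covering function $N_\alpha(\delta)$ is not a definable function: it is integer-valued and unbounded as $\delta \to 0^+$, so its graph is a countable union of definable sets rather than a definable set, and you cannot apply the structure theory of definable one-variable functions (Fact~\ref{fact:cmpower}, Proposition~\ref{proposition:millerconseq}) to it. The same problem, in milder form, afflicts the ball-volume function: for $d$-dimensional sets the Lebesgue volume $\lambda(B_{d_\alpha}(p,t))$ need not be a definable function of $(\alpha,p,t)$ either, and the entire content of Section~\ref{section:ms} (Lemma~\ref{defapprox}, Proposition~\ref{limitequals1}, Proposition~\ref{keyf}) is to manufacture a genuinely definable function $h$ with $\lim_{t\to 0^+}\log\lambda(A_t)/\log h(t)=1$, to which Miller's results can then be applied. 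Your proposal treats definability of these quantities as given, which is precisely the point that needs proof.

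Second, the Ahlfors-type regularity estimate $\mu_\alpha(B_{d_\alpha}(x,r))\asymp r^s$ with a single exponent $s$ independent of $x$ is false in general: the local exponent $\phi(p)=\lim_{t\to 0^+}\log\mu(B(p,t))/\log t$ is a definable function of $p$ that can vary continuously (e.g.\ in $\overline{\mathbb{R}}_{\exp}$ one can warp the metric so that the exponent at $p$ is $1+p$), so no uniform two-sided power bound exists, and your route to $\hdim\geq\dim_{\mathrm{box}}$ collapses. The paper sidesteps box-counting dimension entirely: it uses the pointwise Mass Distribution Principle (Proposition~\ref{proposition:pointwisemass}) together with Proposition~\ref{proposition:logmassdistro}, which only requires the exponent function to be continuous, not constant; continuity is arranged on a definable open set $U_\alpha$ whose complement has smaller topological dimension and is handled by induction on $\dim(X)$. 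If you want to salvage your plan, replace the regularity estimate and box-dimension detour by this pointwise argument, and replace $N_\alpha(\delta)$ by the definable surrogate $h$ for the volume of balls supplied by Proposition~\ref{keyf}.
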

Theorem~\ref{theorem:main} implies that the Hausdorff dimension of an $\overline{\mathbb{R}}_{an}$-definable metric space is a rational number.
Theorem~\ref{theorem:main} was proven for $\overline{\mathbb{R}}_{an}$ in Proposition 11.3.2 and Proposition 11.3.3 of \cite{erikthesis}.
That proof was an application of the theorem of Comte, Lion and Rolin~\cite{CLR} on volumes of $\overline{\mathbb{R}}_{an}$-definable sets.
The present proof relies on weaker but more general facts about volumes of definable sets.
Here is an outline of the proof.
In Section~\ref{section:hausdorff} we define Hausdorff dimension and gather some basic facts from metric geometry.
The crucial result of metric geometry that we use is the following version of the Mass Distribution principle:
\begin{propositionno}
Let $(X,d)$ be a separable metric space and let $\mu$ be a finite Borel measure on $X$ which gives every open set positive measure.  For $p\in X$ and $t\in \mathbb{R}^{>}$, let $B(p,t)\subseteq X$ be the closed ball of radius $t$ centered at $p$.
Suppose that the limit
$$ \phi(p) := \lim_{ t \rightarrow 0^+} \frac{ \log[ \mu B(p,t) ]}{\log(t)} \in \bR\cup\{\infty\} \quad \text{exists for all }p \in X, $$
and that $\phi : X \rightarrow \bR\cup\{\infty\}$ is continuous.
Then the Hausdorff dimension of $(X,d)$ is the supremum of the range of $\phi$.
\end{propositionno}
Let $(X,d)$ be a definable metric space.
The following basic dichotomy for definable metric spaces, 9.0.1 of \cite{erikthesis}, will allow us to apply the previous proposition to $(X,d)$.
\begin{theoremno}
One of the following holds:
\begin{enumerate}
\item There is an infinite definable subset $A \subseteq X$ such that $(A,d)$ is discrete.
\item There is a definable homeomorphism between $(X,d)$ and a definable set equipped with its euclidean topology.
\end{enumerate}
\end{theoremno}
If $(1)$ holds, then the Hausdorff dimension of $(X,d)$ is infinite.
Therefore we may assume that the $d$-topology on $X$ agrees with the euclidean topology.
Omitting technical details, we prove Theorem~\ref{theorem:main} by applying the following proposition, a consequence of mild extensions of results in \cite{MS}, to the definable family $\{ B(p,t) : (p,t) \in X \times \bR^> \}$ of balls in $(X,d)$.
This is Proposition~\ref{keyf} below.
\begin{propositionno}
Let $\lambda$ be $k$-dimensional Lebesgue measure.
Let $X$ be a definable set and $\{ A_{p,t} : (p,t) \in X \times \bR^{>} \}$ be a definable family of  $k$-dimensional sets such that 
$ \lim_{t \rightarrow 0^+} \lambda (A_{p,t}) = 0$ for all $p \in \bR^l$.  Then $$\phi (p):=\lim_{t\to 0^+}\frac{\log \lambda (A_{p,t})}{\log t} \in \bR\cup\{\infty\}$$ is a definable function of $p$ which takes values in $\Lambda\cup\{\infty\}$.
If $R$ is polynomially bounded, then $\phi$ takes only finitely many distinct values.
\end{propositionno}
For $\bar{\mathbb{R}}_{an}$, this is an easy consequence of the work of Comte, Lion and Rolin~\cite{CLR}.

\medskip\noindent
{\bf Notation and Conventions.\/}
By $k,l,m,n$ we shall always denote nonnegative integers.
We let $\mathcal{R}$ be a big elementary extension of $R$.  
The expansion of $R$ by the logarithm, $R_{log}$, is o-minimal, see \cite{speiss}).
By saturation we may view $\mathcal R$ as a substructure of an elementary expansion of $R_{log}$, and in this sense we take the logarithm of elements of $\mathcal{R}$.

By $\O$ we denote the convex hull of $\Q$ in $\mathcal R$.  Then $\O$ is a convex subring of $\mathcal R$, hence a valuation ring.  We denote its maximal ideal by $\ma$. Its residue field is $\overline{\mathbb{R}}$ with residue map $\st \colon \mathcal{O} \to \mathbb{R}$.






Let $M$ be an o-minimal field, and let 
$X,Y \subseteq M^n$ be definable.  Then we write $X \subseteq_0 Y$ iff $\dim (X\setminus Y)<n$, and $X=_0 Y$ iff $X\subseteq_0 Y$ and $Y \subseteq_0 X$. A property holds for almost all elements of $X$ if it holds for all elements of $X$ outside of a definable subset of dimension $<n$.  By a box in $M^n$ we mean a definable set of the form $[a_1 , b_1 ]\times  \dots \times [a_n ,b_n ] \subseteq M^n$, where $a_i < b_i$ for all $i$. By $p^{n}_{m}\colon M^n \to M^m$ we denote the projection onto the first $m$ coordinates.

We say that a definable $X \subseteq \mathcal{O}^n$ is $d$-\textbf{thin} if $\dim(X) \leqslant d$ and $\interior {\st ( \pi X)}=\emptyset$ for all orthogonal projections $\pi: \mathcal{R}^n \rightarrow \mathcal{R}^d$.
In the terminology of Valette~\cite{vanish} a thin set is ``$\mathcal{O}$-thin''.
If a definable $X \subseteq \mathcal{O}^n$ is not $d$-thin then we say that it is $d$-\textbf{fat}.
Note that a definable subset of $\mathcal{O}^n$ is $n$-fat if and only if $\interior{\st X} \neq \emptyset$.

If $(X,d)$ is a metric space and $p\in X$, $t\in \mathbb{R}^>$, then $B(p,t)\subseteq X$ is the closed ball of radius $t$ centered at $p$.

\end{section}

\section{Hausdorff Dimension}\label{section:hausdorff}
\textit{Throughout this section $(X,d)$ is a metric space.}
In this section we gather some facts about Hausdorff dimension.
Given $A \subseteq X$ the diameter of $A$ is 
$$\diam_d(A) := \sup\{ d(x,y) : x,y \in A \}\in \mathbb{R}_{\infty}.$$
Fix $r \in \bR^>$.
We define the $r$-dimensional \textbf{Hausdorff measure} on $X$.
Given $\delta >0$ and $A \subseteq X$ we declare
$$ \mathcal{H}^{r}_\delta(A) = \inf \sum_{B \in \mathcal{B}} [\diam_d(B)]^r \in \bR_{\infty}. $$
where the infimum is taken over all countable collections $\mathcal{R}$ of closed balls which cover $A$ and have diameter at most $\delta$.
We define
$$ \mathcal{H}^{r}(A) = \lim_{\delta \rightarrow 0^+} \mathcal{H}^{r}_\delta(A). $$
The limit exists as $\mathcal{H}^{r}_\delta(A)$ decreases with $\delta$.
It is a classical fact that $\mathcal{H}^{r}$ gives a Borel measure on $X$.
The \textbf{Hausdorff dimension} of $(X,d)$ is 
$$ \hdim(X,d) := \sup\{r \in \bR^{>}: \mathcal{H}^r(X,d) = \infty\} \in \bR_{\infty}. $$

\begin{fact}\label{fact:separable}
If $(X,d)$ is not separable then $\hdim(X,d) = \infty$
\end{fact}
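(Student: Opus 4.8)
The plan is to reduce non-separability to the existence of a uniformly separated, uncountable subset, and then to observe that such a set cannot be covered by countably many small balls, which forces every Hausdorff premeasure at small scales to be infinite. Call a set $S \subseteq X$ \emph{$\epsilon$-separated} if $d(x,y) \geq \epsilon$ for all distinct $x,y \in S$. The starting point is the standard characterization: $(X,d)$ is separable if and only if for every $\epsilon > 0$ every $\epsilon$-separated subset of $X$ is countable. Taking the contrapositive, the hypothesis that $(X,d)$ is not separable yields some $\epsilon > 0$ and an uncountable $\epsilon$-separated set $S \subseteq X$. I would establish the one non-trivial direction of this characterization by choosing, for each $n$, a maximal $\tfrac{1}{n}$-separated set $S_n$ via Zorn's lemma; maximality forces $S_n$ to be $\tfrac1n$-dense, so if each $S_n$ were countable then $\bigcup_n S_n$ would be a countable dense set, contradicting non-separability.

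Next I fix an arbitrary $r \in \bR^{>}$ together with any $\delta$ satisfying $0 < \delta < \epsilon$, and show there is \emph{no} countable cover of $X$ by closed balls of diameter at most $\delta$. The key observation is that any closed ball $B$ with $\diam_d(B) \leq \delta < \epsilon$ contains at most one point of $S$: two distinct $x,y \in S \cap B$ would give $d(x,y) \leq \diam_d(B) \leq \delta < \epsilon$, contradicting $\epsilon$-separation. Since $S$ is uncountable while each admissible ball meets $S$ in at most one point, no countable family of such balls can cover $S$, and a fortiori none can cover $X$. Hence the infimum defining $\mathcal{H}^r_\delta(X)$ ranges over the empty set, so $\mathcal{H}^r_\delta(X) = \infty$ under the usual convention that $\inf \emptyset = +\infty$ in $\bR_\infty$.

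Finally, because $\mathcal{H}^r_\delta(X) = \infty$ for every $\delta$ with $0 < \delta < \epsilon$, passing to the limit gives $\mathcal{H}^r(X) = \lim_{\delta \to 0^+} \mathcal{H}^r_\delta(X) = \infty$. As $r \in \bR^{>}$ was arbitrary, $\mathcal{H}^r(X) = \infty$ for all $r > 0$, and therefore $\hdim(X,d) = \sup\{\, r \in \bR^{>} : \mathcal{H}^r(X) = \infty \,\} = \infty$, as claimed.

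The argument is essentially a direct unwinding of the definitions, and the exponent $r$ plays no role at all: non-separability makes even the crudest cover count explode. The only step that is not purely formal is the separability characterization, whose proof rests on selecting maximal separated sets and thus invokes the axiom of choice; I expect this to be the sole point requiring any care.
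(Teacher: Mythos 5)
Your proof is correct. It is essentially the paper's argument run in the opposite direction: the paper proves the contrapositive, observing that $\hdim(X,d)<\infty$ forces $\mathcal H^r_\delta(X)<\infty$ for some $r$ and every $\delta$, hence a countable cover of $X$ by balls of diameter at most $\delta$ for every $\delta$, and the centers of these covers for $\delta=1/n$ form a countable dense set. You instead argue directly, extracting from non-separability an uncountable $\epsilon$-separated set and noting that no countable family of balls of diameter below $\epsilon$ can cover it, so the defining infimum is over the empty set. The two proofs share the same core observation (finiteness of some $\mathcal H^r_\delta$ is equivalent to the existence of a countable cover by small balls, which is incompatible with non-separability); the only real difference is that your route passes through the characterization of separability via maximal separated sets and hence invokes Zorn's lemma, whereas the paper's contrapositive needs only the cover-to-dense-set step. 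Both are fine; the paper's version is marginally leaner, while yours makes explicit why the exponent $r$ is irrelevant.
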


\begin{proof}
Suppose that $\hdim(X,d) < \infty$.
Then for every $\delta > 0$, $(X,d)$ admits a countable covering by balls of diameter at most $\delta$.
This implies that $(X,d)$ is separable.
\end{proof}
The next fact follows directly from the definition of Hausdorff dimension and the fact that $\mathcal{H}^r$ is a Borel measure:
\begin{fact}\label{fact:union}
Let $\{ A_i : i \in \mathbb{N} \}$ be a countable collection of Borel sets which covers $X$.
Then 
$$ \hdim(X,d) = \sup\{ \hdim(A_i,d): i \in \mathbb{N} \}. $$
\end{fact}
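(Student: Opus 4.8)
The plan is to prove the two inequalities $\hdim(X,d)\geq \sup_i \hdim(A_i,d)$ and $\hdim(X,d)\leq \sup_i \hdim(A_i,d)$ separately, using only monotonicity of $\mathcal{H}^r$ under inclusion together with its countable subadditivity, which it enjoys as a Borel measure (indeed already as an outer measure, being defined through coverings). This is the standard countable stability of Hausdorff dimension, so I expect no serious obstacle; the only genuine ingredient is the critical-exponent estimate isolated below.

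For the first inequality I would record that Hausdorff dimension is monotone: since $A_i \subseteq X$ we have $\mathcal{H}^r(A_i) \leq \mathcal{H}^r(X)$ for every $r$, so any $r$ with $\mathcal{H}^r(A_i) = \infty$ also satisfies $\mathcal{H}^r(X) = \infty$. Hence $\hdim(A_i,d) \leq \hdim(X,d)$ for each $i$, and taking the supremum over $i$ gives $\sup_i \hdim(A_i,d) \leq \hdim(X,d)$.

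For the reverse inequality the one computational input I would establish first is the standard dichotomy: if $\mathcal{H}^r(A) < \infty$ and $s > r$, then $\mathcal{H}^s(A) = 0$. This is immediate from the definition, since any cover $\mathcal{B}$ of $A$ by balls of diameter at most $\delta$ satisfies $\sum_{B\in\mathcal{B}}[\diam_d(B)]^s \leq \delta^{s-r}\sum_{B\in\mathcal{B}}[\diam_d(B)]^r$, whence $\mathcal{H}^s_\delta(A) \leq \delta^{s-r}\mathcal{H}^r(A) \to 0$ as $\delta \to 0^+$. In particular, if $r > \hdim(A,d)$ then, choosing $r'$ with $\hdim(A,d) < r' < r$, we have $\mathcal{H}^{r'}(A) < \infty$ and so $\mathcal{H}^r(A) = 0$. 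Now fix any $r > \sup_i \hdim(A_i,d)$. Then $r > \hdim(A_i,d)$ for every $i$, so $\mathcal{H}^r(A_i) = 0$ for all $i$; by countable subadditivity and the fact that $\{A_i\}$ covers $X$ we obtain $\mathcal{H}^r(X) \leq \sum_i \mathcal{H}^r(A_i) = 0$. Thus $\mathcal{H}^r(X) \neq \infty$, which forces $\hdim(X,d) \leq r$, and letting $r$ decrease to $\sup_i \hdim(A_i,d)$ yields $\hdim(X,d) \leq \sup_i \hdim(A_i,d)$. Combining the two inequalities gives the claim.

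The only point that requires mild care, rather than a real obstacle, is reconciling the paper's definition $\hdim(A,d) = \sup\{r : \mathcal{H}^r(A) = \infty\}$ with the behaviour of $\mathcal{H}^r$ on either side of the critical exponent: one must note that $\mathcal{H}^r(A) = \infty$ for $r$ strictly below $\hdim(A,d)$ and $\mathcal{H}^r(A) = 0$ for $r$ strictly above it, both of which follow from the displayed estimate. Everything else is formal manipulation of suprema, so no deeper machinery should be needed.
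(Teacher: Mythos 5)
Your proof is correct, and it is exactly the standard argument the paper has in mind: the paper gives no explicit proof, remarking only that the fact ``follows directly from the definition of Hausdorff dimension and the fact that $\mathcal{H}^r$ is a Borel measure,'' and your two inequalities (monotonicity for $\geq$, the critical-exponent dichotomy plus countable subadditivity for $\leq$) are precisely the standard expansion of that remark. No gaps.
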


The following proposition is a form of the Mass Distribution Principle.
It is a variant of the results in \cite{mattila}.
As we do not know of a proof of this exact result in the metric geometry literature we refer to Proposition 3.3.6 of \cite{erikthesis} for a proof.

\begin{proposition}\label{proposition:pointwisemass}
Suppose $(X,d)$ is separable.
Let $\mu$ be a finite Borel measure on $X$.
Let $A \subseteq X$ be Borel with $\mu(A) > 0$.
Let $r > 0$.
Suppose that for every $p \in A$ 
$$ \theta_r(p) := \lim_{ t \rightarrow 0^+ } \frac{ \mu [B(p,t)] }{ t^r } \in \bR_\infty  $$
exists. 
If $\theta_r(p) > 0$ at every $p \in A$ then $\hdim(A,d) \geqslant r$.
If $\theta_r(p) < \infty$ at every $p \in A$ then $\hdim(A,d) \leqslant r$.
\end{proposition}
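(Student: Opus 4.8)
The plan is to derive the two implications from the two complementary halves of the density (mass-distribution) comparison between $\mu$ and $\mathcal{H}^r$, in the ball-covering form matching the definition of $\mathcal{H}^r$ above; this is the concrete content of the Mass Distribution Principle, and it is the $\mathcal{H}^r$-analogue of the density theorems of \cite{mattila}. Since $\theta_r(p)$ is a genuine limit, the hypothesis $\theta_r(p)<\infty$ says the upper density $\limsup_{t\to 0^+}\mu[B(p,t)]/t^r$ is finite at each $p\in A$, while $\theta_r(p)>0$ says the lower density $\liminf_{t\to 0^+}\mu[B(p,t)]/t^r$ is positive at each $p\in A$; these are precisely the inputs the two halves consume. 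Separability is used to restrict the covers in $\mathcal{H}^r_\delta$ to countable families of balls, as in the definition.

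For the half bounding dimension from below, suppose one has a uniform upper bound $\mu[B(q,t)]\le C\,t^{r}$ valid for all $q$ in a subset $A'\subseteq A$ and all $t\le\delta$. Any ball $B$ of diameter at most $\delta$ that meets $A'$ in a point $q$ satisfies $B\subseteq B(q,\diam_d(B))$, hence $\mu(B)\le C[\diam_d(B)]^{r}$; summing over a countable cover of $A'$ by such balls gives $\mu(A')\le C\sum[\diam_d(B)]^{r}$, so $\mathcal{H}^r_\delta(A')\ge\mu(A')/C$ for every $\delta$, and thus $\hdim(A')\ge r$ as soon as $\mu(A')>0$. For the half bounding dimension from above, suppose instead $\mu[B(q,t)]\ge c\,t^{r}$ for all $q\in A'$ and $t\le\delta$. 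Applying the $5r$-covering lemma (valid in any metric space, countability supplied by separability) to $\{B(q,t):q\in A',\ t\le\delta\}$ yields a countable disjoint subfamily $\{B(q_i,t_i)\}$ whose dilates $\{B(q_i,5t_i)\}$ cover $A'$; then $\sum[\diam_d(B(q_i,5t_i))]^{r}\le 10^{r}\sum t_i^{r}\le 10^{r}c^{-1}\sum\mu[B(q_i,t_i)]\le 10^{r}c^{-1}\mu(X)$, so $\mathcal{H}^r(A')\le 10^{r}c^{-1}\mu(X)<\infty$ and hence $\hdim(A')\le r$.

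The main obstacle is that the hypotheses are only pointwise, so neither $C,\delta$ nor $c,\delta$ is available uniformly on $A$. I would remove this by the standard stratification: for $n\in\mathbb{N}$ put $A_n^{+}=\{p\in A:\mu[B(p,t)]\le n\,t^{r}\text{ for all }t\le 1/n\}$ and $A_n^{-}=\{p\in A:\mu[B(p,t)]\ge n^{-1}t^{r}\text{ for all }t\le 1/n\}$. These are Borel and increasing; $\bigcup_n A_n^{+}=A$ under $\theta_r<\infty$ and $\bigcup_n A_n^{-}=A$ under $\theta_r>0$. On $A_n^{+}$ the first half applies with $C=n$, $\delta=1/n$, and since $\mu(A)>0$ some $A_n^{+}$ has positive measure, giving $\hdim(A_n^{+})\ge r$; on $A_n^{-}$ the second half applies with $c=n^{-1}$, $\delta=1/n$, giving $\hdim(A_n^{-})\le r$ for every $n$. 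Countable stability of Hausdorff dimension, Fact~\ref{fact:union}, then transfers these to $A=\bigcup_n A_n^{\pm}$. Tracing the pairing through, the finiteness hypothesis $\theta_r<\infty$ yields $\hdim(A)\ge r$ and the positivity hypothesis $\theta_r>0$ yields $\hdim(A)\le r$; I would flag that this is the matching the argument produces, namely the transpose of the inequalities as literally displayed, and would state each implication with the density condition paired to the bound it actually delivers.
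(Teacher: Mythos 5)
Your argument is correct, and since the paper gives no proof of this proposition at all --- it defers to Proposition 3.3.6 of \cite{erikthesis} --- there is nothing to compare line by line; what you wrote (the ball-cover mass-distribution bound for one direction, the $5r$-covering lemma plus separability for the other, and the stratification $A=\bigcup_n A_n^{\pm}$ with Fact~\ref{fact:union} to pass from pointwise to uniform hypotheses) is the standard proof of this kind of density statement.

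More importantly, your closing flag is right and deserves to be stated as a correction rather than a caveat: the proposition as printed is false, because the two hypotheses are paired with the wrong conclusions. Take $X=A=[0,1]$ with the euclidean metric and $\mu$ Lebesgue measure, so that $\hdim(A,d)=1$ and $\mu[B(p,t)]$ is comparable to $t$. For $r=3$ the limit $\theta_3(p)=\infty$ exists and is $>0$ at every $p$, yet $\hdim(A,d)=1<3$, so ``$\theta_r>0$ everywhere $\Rightarrow\hdim(A,d)\geqslant r$'' fails; for $r=1/2$ the limit $\theta_{1/2}(p)=0$ exists and is $<\infty$ at every $p$, yet $\hdim(A,d)=1>1/2$, so ``$\theta_r<\infty$ everywhere $\Rightarrow\hdim(A,d)\leqslant r$'' fails. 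The true pairing is exactly the one your proof delivers: $\theta_r<\infty$ at every point of a set of positive $\mu$-measure gives $\hdim(A,d)\geqslant r$ (this is the only place $\mu(A)>0$ is used), while $\theta_r>0$ everywhere gives $\hdim(A,d)\leqslant r$ (no positivity of $\mu(A)$ needed). Reassuringly, the misstatement is self-cancelling downstream: in the proof of Proposition~\ref{proposition:logmassdistro} the limits are computed with the same transposition --- $\phi(p)<r$ in fact forces $\theta_r(p)=\infty$, not $\theta_r(p)<\infty$, and $\phi(p)>r$ forces $\theta_r(p)=0$ --- so the two swaps cancel and Proposition~\ref{proposition:logmassdistro}, hence Theorem~\ref{theorem:main}, are unaffected.

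Two small points to tighten in your write-up. In the covering half, the $5r$-lemma applied to radii at most $\delta'$ bounds $\mathcal{H}^r_{10\delta'}(A')$ by $10^r c^{-1}\mu(X)$ for every $\delta'\leqslant\delta$; say explicitly that this bound is independent of $\delta'$, so it survives the limit $\delta'\to 0^+$ defining $\mathcal{H}^r$. And the Borel-ness of $A_n^{\pm}$ (needed both for ``some $A_n^{+}$ has positive measure'' and for Fact~\ref{fact:union}) is true but not free: it follows because $p\mapsto\mu[B(p,t)]$ is upper semicontinuous for closed balls and monotone in $t$, so the defining conditions need only be imposed for rational $t$.
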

We make use of the following:
\begin{proposition}\label{proposition:logmassdistro}
Suppose $(X,d)$ is separable.
Let $\mu$ be a finite Borel measure on $X$ which assigns to every open set positive measure.
Suppose that for every $p \in X$, the limit
$$ \phi(p) := \lim_{ t \rightarrow 0^+ } \frac{ \log \mu [B(p,t)] }{ \log t } \in \mathbb{R}_\infty $$
exists.
If $\phi$ is continuous, then
$$ \hdim(X,d) = \sup\{ \phi(p) : p \in X \} .$$
\end{proposition}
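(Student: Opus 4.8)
The plan is to read off the proposition from the pointwise mass distribution principle, Proposition~\ref{proposition:pointwisemass}, after converting the logarithmic local exponent $\phi$ into the power-law densities $\theta_r$ to which that principle applies. Write $s := \sup\{\phi(p) : p \in X\}$. The first step is the elementary but sign-sensitive observation that, for a fixed $p \in X$ and $r \in \bR^{>}$, the position of $\phi(p)$ relative to $r$ determines $\theta_r(p)$. Since $\log t \to -\infty$ as $t \to 0^+$, if $\phi(p) > r$ then choosing $r'$ with $r < r' < \phi(p)$ gives $\log \mu[B(p,t)] < r' \log t$ for all sufficiently small $t$ (the inequality flips because $\log t < 0$), whence $\mu[B(p,t)]/t^r < t^{r'-r} \to 0$ and so $\theta_r(p) = 0$; symmetrically, if $\phi(p) < r$ then $\theta_r(p) = \infty$. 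In particular the limit defining $\theta_r(p)$ exists whenever $\phi(p) \neq r$, so Proposition~\ref{proposition:pointwisemass} is applicable on any Borel set on which $\phi$ avoids the value $r$.

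For the lower bound $\hdim(X,d) \geq s$, I would fix an arbitrary $r < s$ and use continuity of $\phi$ to manufacture a set of positive measure. By definition of the supremum there is $p_0$ with $\phi(p_0) > r$, and since $\phi$ is continuous the set $U := \{ p \in X : \phi(p) > r \}$ is open and nonempty; as $\mu$ assigns positive measure to every nonempty open set, $\mu(U) > 0$. By the first step $\theta_r \equiv 0$ on $U$, so Proposition~\ref{proposition:pointwisemass} yields $\hdim(U,d) \geq r$, and hence $\hdim(X,d) \geq r$ by monotonicity of Hausdorff dimension (immediate from the definition, or a special case of Fact~\ref{fact:union}). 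Letting $r \uparrow s$ gives $\hdim(X,d) \geq s$, including the case $s = \infty$.

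For the upper bound $\hdim(X,d) \leq s$ there is nothing to prove when $s = \infty$, so assume $s < \infty$ and fix $r > s$. Then $\phi(p) \leq s < r$ for every $p \in X$, so by the first step $\theta_r \equiv \infty$ on all of $X$; since $X$ is itself a nonempty open subset of $X$ we have $\mu(X) > 0$, and Proposition~\ref{proposition:pointwisemass} applied with $A = X$ gives $\hdim(X,d) \leq r$. Letting $r \downarrow s$ gives $\hdim(X,d) \leq s$, which together with the previous paragraph proves the proposition.

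I expect the only real subtlety to be bookkeeping the direction of the inequalities: the sign change coming from $\log t < 0$ must be tracked carefully so that $\phi(p) > r$ is matched with the finiteness of $\theta_r(p)$, hence with the half of Proposition~\ref{proposition:pointwisemass} that produces a lower bound on Hausdorff dimension, and $\phi(p) < r$ with the positivity of $\theta_r(p)$, hence with the half that produces an upper bound. The one genuinely essential use of the continuity hypothesis occurs in the lower bound, where it upgrades the mere existence of a single point $p_0$ with $\phi(p_0) > r$ to an open, and therefore positive-measure, set on which the mass distribution principle can be run; without continuity a supremum attained only on a $\mu$-null set would be invisible to $\mathcal{H}^r$ and the lower bound could fail.
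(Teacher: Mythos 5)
Your proof is correct and follows essentially the same route as the paper's: compute the local densities $\theta_r$ from $\phi$, use continuity of $\phi$ to produce an open (hence positive-measure) set for the lower bound, and apply Proposition~\ref{proposition:pointwisemass} on all of $X$ for the upper bound. One discrepancy is worth flagging. Your computation $\phi(p)>r\Rightarrow\theta_r(p)=0$ and $\phi(p)<r\Rightarrow\theta_r(p)=\infty$ is the correct one, and you then invoke the mass distribution principle in its standard form: everywhere-finite density on a set of positive measure gives a lower bound on dimension, everywhere-positive density gives an upper bound. But Proposition~\ref{proposition:pointwisemass} as printed states the transposed implications ($\theta_r>0$ everywhere gives $\hdim\geqslant r$, and $\theta_r<\infty$ everywhere gives $\hdim\leqslant r$), which is false as literally written (take $X=[0,1]$ with Lebesgue measure and $r=2$: then $\theta_2\equiv\infty>0$ yet $\hdim=1$). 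The paper's own proof of the present proposition compensates by asserting the correspondingly transposed, individually incorrect, intermediate claims that $\phi(p)<r$ forces $\theta_r(p)<\infty$ and that $\phi\geqslant r$ on $U$ forces $\theta_r>0$ on $U$; the two transpositions cancel and the final conclusions come out right. Your argument is the internally consistent version, but as written it does not follow from Proposition~\ref{proposition:pointwisemass} in the form stated in the paper; you should either remark that you are using the corrected form of that proposition (swap its two conclusions) or cite the standard Frostman-type statement directly. With that caveat the argument is complete, including the edge cases $s=\infty$ and the existence of the limits $\theta_r(p)$ where you apply the principle.
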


\begin{proof}
Suppose that $\phi(p) < r$ for all $p \in X$.
Then for each $p \in X$ we have
$$ \lim_{t \rightarrow 0^+ } \frac{ \mu [B(p,t)] }{ t^r } < \infty $$
Applying Proposition~\ref{proposition:pointwisemass} we have $\hdim(X,d) \leqslant r$.
Now suppose that $\sup \{ \phi(p) : p \in X \} > r$.
As $\phi$ is continuous there is an open $U$ on which $\phi$ is bounded from below by $r$.
For each $p \in U$ we have
$$ \lim_{t \rightarrow 0^+ } \frac{ \mu [B(p,t)] }{ t^r } > 0 $$
As $U$ has positive measure, Proposition~\ref{proposition:pointwisemass} implies $\hdim(U,d) \geqslant r$.
This shows that $\hdim(X,d) \geqslant r$.
\end{proof}

\begin{section}{Miller's Dichotomy}
In this section we prove Proposition~\ref{keyf} for definable families of subsets of $\mathbb{R}$.
Let $\mathcal{A} = \{ A_{p,t} : (p,t) \in \bR^l \times \bR^> \}$ be a definable family of subsets of $\bR$.
The one-dimensional Lebesgue measure of $A_{p,t}$ is the sum of the lengths of the connected components of $A_{p,t}$ and is thus a definable function of $(p,t)$.
Corollary~\ref{def} for $\mathcal{A}$ will follow from Proposition~\ref{proposition:millerconseq}:
\begin{proposition}\label{proposition:millerconseq}
Let $\{ f_p : p \in \bR^n \}$ be a definable family of functions $\bR^{>} \rightarrow \bR^{>}$.
Then
$$ F(p) := \lim_{ t \rightarrow 0^+ } \frac{ \log f_p(t) }{ \log(t) } \in \bR_{\infty} $$
is a definable function of $p$ which takes values in the field of powers $\Lambda_\infty$.
If $\R$ is polynomially bounded, then $F$ takes only finitely many distinct values.
\end{proposition}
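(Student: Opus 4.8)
The plan is to read $F(p)$ as the leading exponent of $f_p$ at $0^+$, and to organize everything around Miller's growth dichotomy: either $R$ is polynomially bounded, or $R$ defines $\exp$. First I would record existence of the limit uniformly. Recalling that $R_{\log}$ is o-minimal, the family $(p,t)\mapsto \log f_p(t)/\log t$ is $R_{\log}$-definable, so for each fixed $p$ the Monotonicity Theorem shows $t\mapsto \log f_p(t)/\log t$ is eventually monotone and hence $F(p)$ exists in $\bR\cup\{\pm\infty\}$; since limits of definable families are definable, $F$ is at least $R_{\log}$-definable. The remaining content — that $F$ is $R$-definable, takes values in $\Lambda_\infty$, and is finite-valued when $R$ is polynomially bounded — I would then extract case by case.

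In the polynomially bounded case I would invoke Miller's theorem on power asymptotics: a definable $f\colon\bR^>\to\bR^>$ satisfies $f(t)\sim c\,t^{r}$ as $t\to 0^+$ for some $c\in\bR^>$ and some $r\in\Lambda$. Applied to each $f_p$ this gives $\log f_p(t)=\log c_p + r_p\log t + o(1)$, and dividing by $\log t\to-\infty$ yields $F(p)=r_p\in\Lambda$; in particular $F$ is finite everywhere in this case. For the finiteness of the range I would use the uniform finiteness of the exponents occurring in a definable family (equivalently, the preparation theorem for polynomially bounded structures): there is a finite set $\{e_1,\dots,e_k\}\subseteq\Lambda$ containing every $r_p$. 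Because each $e_i\in\Lambda$, the function $t\mapsto t^{e_i}$ is $R$-definable, so
$$\{p : F(p)=e_i\}=\Big\{p : \lim_{t\to 0^+} f_p(t)\,t^{-e_i}\in\bR^>\Big\}$$
is $R$-definable (the inner limit of an $R$-definable family is $R$-definable, and ``$\in\bR^>$'' is a definable condition). Thus $F$ is $R$-definable with finite range contained in $\Lambda$.

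In the case where $R$ defines $\exp$, the function $\log$, being the inverse of a definable bijection, is $R$-definable, so $R=R_{\log}$ and the existence/definability discussion above already shows $F$ is $R$-definable with $F(p)\in\bR\cup\{\pm\infty\}$. Since $t^{r}=\exp(r\log t)$ is definable for every real $r$, here $\Lambda=\bR$, so $\Lambda_\infty=\bR\cup\{\pm\infty\}$ and the requirement $F(p)\in\Lambda_\infty$ is automatic; no finiteness is asserted in this case.

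The step I expect to carry the real weight is the finiteness of the set of leading exponents attained by a definable family in the polynomially bounded case. Existence of the limit and its definability are soft consequences of o-minimality of $R_{\log}$ together with Miller's power asymptotics; but controlling the range uniformly in $p$ is what genuinely produces the ``only finitely many values'' conclusion, and it is here that the preparation-theorem machinery for polynomially bounded o-minimal structures is essential.
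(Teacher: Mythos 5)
Your proposal is correct and follows essentially the same route as the paper: split on Miller's dichotomy, dispose of the exponential case by noting $\Lambda=\bR$ and that the limit of a definable family is definable, and in the polynomially bounded case use Miller's uniform power-asymptotics result (Proposition 5.2 of \cite{CMpower}, the paper's Fact~\ref{fact:cmpower}) to pin $F(p)$ to one of finitely many exponents $r_i\in\Lambda$ and to express $\{p: F(p)=r_i\}$ as the definable condition $0<\lim_{t\to 0^+}f_p(t)/t^{r_i}<\infty$. The only cosmetic difference is that you spell out the existence of the limit via o-minimality of $R_{\log}$, which the paper leaves implicit.
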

Proposition~\ref{proposition:millerconseq} is a corollary of two results of Miller.
The first is the fundamental dichotomy~\cite{CM}:
\begin{theorem}
Exactly one of the following holds:
\begin{enumerate}
\item $\R$ is polynomially bounded,
\item The exponential function is definable.
\end{enumerate}
\end{theorem}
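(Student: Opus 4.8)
The plan is to split the statement into mutual exclusivity and the implication ``$R$ is not polynomially bounded $\Rightarrow \exp$ is definable,'' the latter being the real content. Mutual exclusivity is immediate: if $\exp$ were definable then $\exp(x) > x^N$ for all large $x$ and every $N \in \N$, so $R$ could not be polynomially bounded. Hence at most one alternative holds, and it suffices to assume $R$ is not polynomially bounded and produce a definable exponential; this yields exactly one alternative.

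First I would pass to the Hardy field of germs at $+\infty$ of definable unary functions. By the Monotonicity Theorem every definable $f\colon (a,\infty)\to\bR$ is eventually continuous, monotone, and $C^1$ with definable derivative, so these germs form a field closed under differentiation, and L'H\^opital's rule is available. For a definable $f\to+\infty$ I would study the germ of the logarithmic derivative $\ell_f := f'/f$ together with the quantity $\lim_{x\to\infty} x\ell_f(x) = \lim_{x\to\infty} x f'(x)/f(x)$, which lies in $\bR\cup\{+\infty\}$ because $x\ell_f$ is an eventually monotone definable germ. Applying L'H\^opital to $\log f/\log x$, whose derivative ratio is exactly $x\ell_f$, shows that if this limit is a finite real $r$ then $f = x^{r+o(1)}$, while if it is $+\infty$ then $f$ outgrows every power. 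Consequently $R$ is polynomially bounded precisely when $\lim x f'/f$ is finite for every definable $f\to+\infty$. Since $R$ is assumed not polynomially bounded, I obtain a definable increasing $f\to+\infty$ whose logarithmic derivative strictly dominates $1/x$, i.e.\ $\lim_{x\to\infty} x f'(x)/f(x) = +\infty$.

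The hard part, and the main obstacle, is to bootstrap from such an $f$ to a definable solution of the exponential equation. The target is a definable $E$ on some interval with $E' = E$ (equivalently, a definable antiderivative of $1/x$): any such $E$ equals $c\exp$ for some $c>0$, so its inverse is $\log(\cdot/c)$, and since o-minimal structures are closed under compositional inverse and affine shifts this makes $\log$, hence $\exp$, definable. The difficulty is that the naive change of variables $E = f\circ\sigma$ forcing $E'/E\equiv 1$ requires $\sigma = \Lambda^{-1}$ with $\Lambda' = \ell_f$, and $\Lambda = \int \ell_f = \int f'/f$ is morally $\log f$; so a direct attempt is circular and, tellingly, never uses the strict domination $x\ell_f\to\infty$. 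Breaking this circularity is exactly Miller's key lemma.

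I would organize the remaining work as an analysis of the germ $f$ in the spirit of Rosenlicht's theory of levels of Hardy fields: by iterating the logarithmic-derivative operator and comparing the resulting germs with $1/x$ and its iterates, the failure of polynomial boundedness forces a strictly positive level, and a minimal such witness is a definable germ whose \emph{own} logarithmic derivative tends to a finite positive constant. This is precisely the case in which the circular change of variables becomes legitimate, producing a definable $E$ comparable to $\exp$ and satisfying $E'=E$. This level/Hardy-field analysis is where o-minimality is used essentially and is the technically demanding step; once a definable $E$ with $E'=E$ is in hand, definability of $\exp$ follows formally as above, which completes the dichotomy.
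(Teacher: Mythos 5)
First, a point of comparison with the source: the paper does not prove this statement at all --- it is Miller's theorem, quoted from \cite{CM}, and the paper's ``proof'' is the citation. So your proposal is an attempt to reprove Miller's theorem from scratch. Your opening reductions are correct and do match the standard opening of Miller's argument: mutual exclusivity is immediate from growth; germs at $+\infty$ of definable unary functions form a Hardy field; and $R$ fails to be polynomially bounded if and only if some definable $f \to +\infty$ satisfies $\lim_{x\to\infty} x f'(x)/f(x) = +\infty$ (if $xf'/f \leqslant N$ eventually, integrating $f'/f \leqslant N/x$ gives $f(x) \leqslant C x^N$).

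The gap is at the step you yourself label ``exactly Miller's key lemma'': you never prove it, and the mechanism you sketch in its place fails. Concretely, you propose to iterate the operator $f \mapsto f'/f$ and claim that a ``minimal witness'' of non-polynomial-boundedness has logarithmic derivative tending to a finite positive constant. Take $R = \overline{\mathbb{R}}_{\exp}$ and the witness $f(x) = e^{x^2}$: then $f'/f = 2x$, whose own logarithmic derivative is $1/x \to 0$, and every further iterate also tends to $0$. The iteration drops straight from superpolynomial to polynomial growth and never produces a germ with $f'/f \to c \in (0,\infty)$; manufacturing such a germ (here $e^x$) from $e^{x^2}$ is genuinely new work, and asserting its existence in a general non-polynomially-bounded $R$ is essentially the theorem itself. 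Rosenlicht's level theory cannot be invoked to fill this, since levels are defined via iterated logarithms --- precisely what is not yet available inside the structure. Moreover, even granted a definable $f$ with $f'/f \to c \in (0,\infty)$, your final claim that the ``circular change of variables becomes legitimate'' and yields a definable $E$ with $E' = E$ exactly is unjustified: $f'/f \to c$ only gives $f(x) = e^{(c+o(1))x}$. Miller's actual proof avoids solving any differential equation exactly. From a superpolynomial $f$ one sets $g = f/f'$, notes $g(x)/x \to 0$, deduces $g'(x) \to 0$ (the limit of $g'$ exists by o-minimality and must agree with that of $g/x$), and then applies the Mean Value Theorem to $\log f$: for each fixed $t$,
$$ \log \frac{f\bigl(x + t\,g(x)\bigr)}{f(x)} \;=\; t\,\frac{g(x)}{g(\xi_x)} \;\longrightarrow\; t \qquad (x \to \infty), $$
since $g' \to 0$ forces $g(\xi_x)/g(x) \to 1$. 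Hence $\exp(t) = \lim_{x \to \infty} f(x + t\,g(x))/f(x)$ is the pointwise limit of a definable family of functions of $t$, and such limits are definable in an o-minimal structure. This trick --- obtaining $\exp$ as a definable limit rather than as a definable solution of $E' = E$ --- is the missing idea, and without it your argument does not close.
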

The second is Proposition 5.2, p. 92, \cite{CMpower}.
\begin{fact}\label{fact:cmpower}
Suppose that $R$ is polynomially bounded, and let
$f\colon \mathbb{R}^n \times \mathbb{R} \to \mathbb{R}$ be definable.  Then there are $r_1 , \dots ,r_k \in \Lambda$ such that for each $p\in \mathbb{R}^n$, either $f_p$ is ultimately identically 0, or
$$\lim_{ t \rightarrow \infty} \frac{f_p (x)}{x^{r_i}}=c(p)$$ for some $i \in \{ 1, \dots ,k  \}$ and definable $c\colon \bR^n \to \bR^{\times}$. 
\end{fact}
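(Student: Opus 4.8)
The plan is to read off the fiberwise asymptotics from the single-variable power-growth theorem for polynomially bounded structures, and then to obtain the two uniformities---finiteness of the exponents and definability of the coefficient $c$---from a parametrized structure theorem. First I would put $f$ into a normal form near $+\infty$. Applying the monotonicity theorem uniformly in $p$ (equivalently, an o-minimal cell decomposition of $\mathbb{R}^n \times \mathbb{R}$ compatible with the projection to $\mathbb{R}^n$), I get a definable $N\colon \mathbb{R}^n \to \mathbb{R}$ so that for every $p$ the restriction of $f_p$ to $(N(p),\infty)$ is continuous and either identically $0$ or nowhere $0$; in the latter case its sign there is constant, so Miller's single-variable power-growth theorem provides a unique $r(p)\in\Lambda$ and $c(p)\in\bR^{\times}$ with $f_p(x)\,x^{-r(p)}\to c(p)$ as $x\to\infty$. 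What remains is to show that $r$ has finite range and that $c$ is definable.

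For both I would invoke the Preparation Theorem for polynomially bounded o-minimal structures. It yields a finite definable partition of the parameter space such that, on each piece and on the corresponding end in $x$,
$$ f(p,x) = a(p)\, x^{\rho}\, U(p,x), $$
where $\rho \in \Lambda$, the function $a$ is definable, and $U$ is a definable unit with $\lim_{x\to\infty} U(p,x)$ a nonzero definable function of $p$. The finitely many exponents $\rho$ appearing across the pieces are the desired $r_1,\dots,r_k$; on the piece carrying exponent $\rho$ one has $r(p)=\rho$ and $c(p)=a(p)\lim_{x\to\infty}U(p,x)$, which is definable. Patching over the finitely many pieces and reinstating the eventually-zero fibers then gives the statement.

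The main obstacle is the finiteness of the exponent set, and it is the genuinely hard point. It cannot be obtained from o-minimality of $r\colon \mathbb{R}^n \to \Lambda$ by itself: there exist polynomially bounded expansions whose field of powers $\Lambda$ is all of $\bR$, so a definable $\bR$-valued function may perfectly well have infinite range. The conceptual reason finiteness nonetheless holds is that the two-variable power map $(x,y)\mapsto x^{y}$ is not definable in any such structure---it would define $\exp$ and contradict the growth dichotomy---so a single definable $f$ can involve only finitely many of the individual power functions $x^{\rho}$. The preparation theorem is precisely the tool that makes this intuition rigorous and uniform in $p$, so the real work lies in establishing (or citing) preparation in this generality; the remaining steps---the uniform monotonicity reduction, the eventual-sign and eventually-zero bookkeeping, and reading off $c$---are routine.
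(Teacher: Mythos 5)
Your argument is essentially correct, but note first that the paper itself offers no proof of this statement: Fact~\ref{fact:cmpower} is quoted verbatim from Miller (Proposition 5.2, p.~92 of \cite{CMpower}), so any proof you give is automatically a different route from the paper's. Within your argument, the preliminary reduction (cell decomposition giving, for each $p$, an end $(N(p),\infty)$ on which $f_p$ is continuous and either identically zero or of constant nonzero sign) and the fiberwise application of Miller's one-variable theorem are fine, and your diagnosis of where the difficulty lies is exactly right: since $(x,y)\mapsto x^y$ is not definable and $\Lambda$ may be all of $\mathbb{R}$, finiteness of the exponent set cannot come from a definable choice of $r(p)$. Your way of obtaining it, the preparation theorem for polynomially bounded o-minimal structures (van den Dries--Speissegger, building on the valuation property for $T$-convex theories), does work: on the piece whose fiber over $p$ contains the end one has $f(p,x)=|x-\theta(p)|^{\rho}a(p)u(p,x)$; the center is absorbed because $|1-\theta(p)/x|^{\rho}\to 1$; the bounded unit has a definable, nonvanishing limit at infinity by o-minimality; hence $r(p)=\rho$ and $c(p)=a(p)\lim_{x\to\infty}u(p,x)$ on that piece, and finitely many pieces give finitely many exponents. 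The trade-off relative to the paper's source is one of economy and history: Miller proved this in 1994 by comparatively elementary, self-contained means, whereas preparation is a substantially deeper theorem from 2002, so you are deriving the weaker statement from a much stronger one. This is legitimate and, as far as I can tell, not circular---the van den Dries--Speissegger proof rests on Miller's dichotomy \cite{CM} and on $T$-convexity, not on the uniform asymptotics statement itself---but your write-up should cite preparation precisely and make that non-circularity explicit, since in this corner of the literature nearly everything descends from Miller's papers.
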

Note that then, if $f_p \colon \mathbb{R}^{>} \to \mathbb{R}^{>}$ for all $p\in \mathbb{R}^n$, then there are $r_1 ,\dots ,r_k \in \Lambda$ such that for each $p\in \mathbb{R}^n$ there is a $i\in \{ 1,\dots ,k  \}$ so that: $$0<\lim_{x\to 0^+} \frac{f_p (x)}{x^{r_i }}<\infty .$$
We now prove Proposition~\ref{proposition:millerconseq}:
\begin{proof}
The proposition clearly holds if the exponential function is definable.
Suppose that $\R$ is polynomially bounded.
Fix $p \in \bR^n$.
Applying Fact~\ref{fact:cmpower} we obtain
$\lambda_1,\lambda_2 \in \bR^>$ and $r \in \bR$ are such that $\lambda_1 t^r \leqslant f_p(t) \leqslant \lambda_2 t^r$ holds for all sufficiently small $t$.
Taking $\log$ and dividing through by $\log(t)$ yields:
$$ \frac{ \log(\lambda_1) }{ \log(t) } + r \geqslant \frac{ \log f_p(t) }{ \log(t) } \geqslant \frac{ \log(\lambda_2) }{ \log(t) } + r $$
for all sufficiently small $t$.
Thus:
$$  \lim_{ t \rightarrow 0^+ } \frac{ \log f_p(t) }{ \log(t) } = r. $$
Let $r_1,\ldots,r_k \in \Lambda$ be such that for any $p \in \mathbb{R}^n$ there are
$\lambda_1,\lambda_2 \in \bR^>$ such that for some $i \in \{ 1, \dots ,k  \}$, $\lambda_1 t^{r_i} \leqslant f_p(t) \leqslant \lambda_2 t^{r_i}$ holds for all sufficiently small $t$.
For all $i \in \{ 1,\dots ,k\}$ the set of $p \in \mathbb{R}^n$ such that
$$ 0 < \lim_{t \rightarrow 0^+} \frac{f_p(t)}{t^{r_i}} < \infty$$
is definable.
It follows that
$$ \lim_{ t \rightarrow 0^+ } \frac{ \log f_p(t) }{ \log(t) } $$
is a definable function of $p \in \mathbb{R}^n$.
\end{proof}

\end{section}

\begin{section}{Measures on definable sets}\label{section:ms}
In this section we prove Proposition~\ref{keyf}.
In \cite{MS}, the authors define a finitely additive measure $\nu$ (see Definition \ref{defmeasure}) on the definable subsets of $\mathcal{O}^n$ which takes values in an ordered semiring (a quotient of $\mathcal{O}^{\geq}$) and agrees with the Lebesgue measure of $\st X$ in the case when $X$ is $n$-fat.  When $X$ is $n$-thin, the measure of an open cell $X$ agrees with the supremum of the measure of all boxes inscribed in a certain isomorphic image of $X$ (an isomorphism $X \to \phi X$ here is, roughly, a $C^1$-diffeomorphism $\phi$ such that $|\det J (x) |=1$ for almost all $x \in X$).  We first need to extend the definitions in \cite{MS} to $d$-dimensional measure.  We note that while we assume for simplicity that $\mathcal{R}$ is sufficiently saturated, this assumption is not needed for the definition of $\nu$ in \cite{MS}, nor is it needed for its $d$-dimensional version.  The only adjustment one needs to make when dropping the saturation assumption is to replace $\widetilde{\mathcal{O}}$ in Definition \ref{defmeasure} by its Dedekind completion.

Definitions~\ref{eqdef} and~\ref{defmeasure} below are from \cite{MS} (stated in slightly weaker form).  Lemma \ref{defapprox} is a consequence of results from \cite{MS}.  It will yield the desired result on limits of families of open sets (Corollary \ref{def}), which will in turn imply the result in full generality.

\smallskip\noindent
We shall use the following convention.  Suppose $M$ is an o-minimal field and $X\subseteq M^n$ is an open cell with $p^{n}_{k}X = (f_k , g_k )$ for $k=1, \dots ,n$.  Let $x\in X$ and $t\in (0,1)^n$ be such that $$x_i = (1-t_i ) f_i (x_1 , \dots ,x_{i-1}) + t_i g_i (x_1 , \dots ,x_{i-1}).$$
Then $\tau_X \colon X \to \tau_X X \subseteq M^n$ is the map $\tau_X (x) =y$, where
$$y_i = t_i (g_i (x_1 , \dots , x_{i-1}) - f_i (x_1 , \dots ,x_{i-1} ) ) .$$
We define an equivalence relation $\sim$ on $\mathcal{O}^\geqslant$:

\begin{definition}\label{eqdef} Let $x,y \in \ma^{\geqslant}$.  Then $x\sim y$ iff $y^q \leqslant x \leqslant y^p$ for all $p,q \in \mathbb{Q}^{>}$ such that $p<1<q$.  If $x,y >\ma$, then $x\sim y$ iff $\st x =\st y$.  We let $\widetilde{\mathcal{O}}$ be the quotient $\mathcal{O}^{\geq}/\sim$.
\end{definition}
Note that $\mathbb{R}\subseteq \widetilde{\mathcal{O}}$ by the saturation assumption on $\mathcal{R}$. The quotient $\widetilde{\mathcal{O}}$ can be made into an ordered semiring, where the ordering is induced by the ordering on $\mathcal{O}$, and
$\widetilde{x}+\widetilde{y}=\max\{ x,y \}$ if $x \in \ma^>$ or $y \in \ma^{^>}$, and $\widetilde{x}+\widetilde{y}=\widetilde{x+y}$ otherwise.

\smallskip\noindent
For the remainder of this section, $\lambda X$ is the $n$-dimensional Lebesgue measure of $X\subseteq \mathbb{R}^n$.

\begin{definition}\label{defmeasure}
\begin{itemize}
\item[(a)] Let $X\subseteq \mathcal{O}^n$ be a cell.
If $X$ is $n$-fat, then $\nu X = \lambda \st (X)$.  If $X$ is $n$-thin, then $\nu X = \widetilde{a}$, where 
$$a = \max \{ \Pi_{i=1}^{n} (y_i - x_i ) \colon  [x_1 , y_1] \times \dots \times [x_n , y_n ] \subseteq \closure{\tau_X X }  \}.$$


\item[b)] Suppose $X\subseteq \mathcal{O}^n$ is a definable set.  Then $\nu X = \sum_{i=1}^{d}\nu C_i$, where $\{ C_i \}$ is a finite partition of $X$ into cells.
\end{itemize}
\end{definition}
It is shown in \cite{MS} that the above definition is independent of the decomposition of $X$ into cells, and that $\nu X>0$ iff the interior of $X$ in $\mathcal{R}^n$ is nonempty.

\smallskip\noindent
We now define a $d$-dimensional measure $\nu_d$ on the $d$-thin definable subsets of $\mathcal{O}^n$ such that $\nu_d X>0$ whenever $\dim X\geq d$.
On $d$-fat sets one can define a $d$-dimensional measure as Fornasiero, Vasquez do in \cite{fv}. While the proofs in \cite{fv} work only in a sufficiently saturated o-minimal expansion of a real closed field, using Theorem 3.3 on p.244 in \cite{em}, the results from \cite{fv} can be transferred to the general case.

We use the following definitions and theorem of Paw\l ucki \cite{pawlucki} (with slightly modified terminology).  We state these for $R$, but they hold equally well for $\mathcal{R}$ (and we shall use the same terminology for subsets of $\mathcal{R}^n$ as for subsets of $R^n$).

\begin{definition}
Let $f\colon X\to R$, where $X\subseteq R^n$ is open, be $R$-definable and $C^1$, and let $L\in R^{>}$.  Then we say that $f$ is {\bf $L$-controlled\/} if $|\frac{\partial f}{\partial x_i }(a)|\leq L$ for all $i\in \{ 1,\dots ,n  \}$ and almost all $a\in X$.
\end{definition}

\begin{definition}
An open cell $C=(f,g) \subseteq R^n$ will be called a {\bf standard $L$-cell\/} if it is an open interval in the case $n=1$ and, in the case $n>1$, if $p^{n}_{n-1}C$ is a standard $L$-cell and whenever $f$ or $g$ is finite, then it is $L$-controlled. 
\end{definition}

\begin{theorem}\label{pawlucki} Let $X \subseteq R^n$ be open and definable.  Then there are $S_1 , \dots ,S_k \subseteq R^n$ such that $$X=_0 S_1 \dot\cup \dots \dot\cup S_k,$$ where each $S_i$ is a standard $L$-cell after a permutation of coordinates and $L\in R^{>}$ depends only on $n$.
\end{theorem}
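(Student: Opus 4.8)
The plan is to argue by induction on $n$, the main work being to arrange that the walls of each cell have bounded partial derivatives in suitably permuted coordinates. For $n=1$ the $L$-controlled condition is vacuous and any open definable $X \subseteq R$ is a finite disjoint union of open intervals, so the base case is immediate. For the inductive step I would first apply the $C^1$ cell decomposition theorem to write $X =_0 C_1 \dot\cup \dots \dot\cup C_r$, where each $C_j = (f_j, g_j)$ is an open $C^1$ cell over an open base $D_j = p^{n}_{n-1}(C_j)$. It then suffices to treat a single cell $C = (f,g)$ over $D$, refining it into finitely many pieces each of which becomes a standard $L$-cell after a permutation of the coordinates of $R^n$.

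The basic device for controlling one wall is the following. If $f \colon D \to R$ is $C^1$ and, at a given point, $i_0$ is an index with $|\partial_{i_0} f|$ maximal among $|\partial_1 f|, \dots, |\partial_{n-1} f|$ and $|\partial_{i_0} f| \geqslant 1$, then after swapping the coordinates $x_{i_0}$ and $x_n$ the graph of $f$ is, by the implicit function theorem, the graph of a function whose partial derivatives are $1/\partial_{i_0} f$ and $-\partial_j f / \partial_{i_0} f$; all of these have absolute value at most $1$. If instead $|\partial_j f| < 1$ for every $j$, then $f$ is already $1$-controlled and no swap is needed. Partitioning $D$ (modulo a set of dimension $< n-1$) according to which coordinate is steepest and whether the steepest slope exceeds $1$ thus reduces the control of a single wall to the controlled case.

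The main obstacle is that a cell has two walls, $f$ and $g$, and a single permutation must control both simultaneously; this fails, for instance, when $f$ is steep in a direction in which $g$ is flat. I would resolve this by cutting the cell with a hypersurface $x_n = c$ lying strictly between $f$ and $g$ over a suitable subdivision of $D$: after the swap $x_{i_0} \leftrightarrow x_n$ that controls the steep wall $f$, this cut is demoted to a bound on the range of the new coordinate $y_{i_0}$, i.e.\ it becomes part of the base rather than a wall, and so carries no derivative condition, while the steep wall $f$ becomes an inverse-type function with all partials at most $1$ and the flat wall $g$ (or the cut itself) is handled in the unswapped frame. Making this work uniformly across all $n$ levels of the flag of projections $p^{n}_{k}(S_i)$, so that one permutation renders every projection a standard cell and the base remains standard by the inductive hypothesis, is the technical heart of the argument.

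Finally, since each swap produces partial derivatives bounded by $1$ and the passage through the base contributes a factor depending only on $n-1$, one can track the estimates throughout the construction to obtain a single bound $L$ depending only on $n$. I expect the genuinely delicate point to be the bookkeeping in the previous paragraph: ensuring that the finitely many cuts and swaps can be organized so that a single permutation simultaneously normalizes every member of the flag, rather than normalizing the top wall at the expense of a lower projection.
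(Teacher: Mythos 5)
The paper does not actually prove this statement: it is quoted, with slightly modified terminology, from Paw\l ucki's \emph{Lipschitz cell decomposition in o-minimal structures I} (reference \cite{pawlucki}), so there is no in-paper argument to compare yours against. Judged on its own terms, your sketch gets the standard opening moves right: the base case, the reduction via $C^1$ cell decomposition to a single cell $(f,g)$ over a base $D$, and the swap $x_{i_0}\leftrightarrow x_n$ at a point where $|\partial_{i_0}f|$ is maximal and $\geqslant 1$, which by the implicit function theorem turns the wall into a graph with all partial derivatives of absolute value at most $1$. But the proposal stops exactly where the real proof begins, and the one concrete device you offer for the hard part does not work as stated.

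First, a constant cut $x_n=c$ with $f<c<g$ over a full-dimensional piece of $D$ need not exist (take $f$ and $g$ both steep with $g-f$ small everywhere); if you instead cut along a definable function lying between $f$ and $g$, that cut is itself a wall whose derivatives must be controlled, so nothing has been gained. Second, after the swap $x_{i_0}\leftrightarrow x_n$ the fibers of the piece in the new vertical direction are bounded not only by the inverse of $f$ (which is controlled) but also by the inverse of $g$ (controlled only if $g$ happens to be steep in the \emph{same} direction $i_0$, which you cannot assume) and, crucially, by the lateral walls of $D$ in the direction $x_{i_0}$: these become walls of the new cell, and your sketch never addresses their derivative bounds. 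Third, you explicitly defer the requirement that a \emph{single} permutation render every projection $p^{n}_{k}(S_i)$ of the flag a standard cell, calling it bookkeeping; it is not bookkeeping but the actual content of the theorem, since normalizing the top wall can destroy the standardness of a lower projection, and the published proof resolves this with substantially more machinery (a uniform preparatory decomposition adapted to finitely many projection directions chosen in advance) than a pointwise steepest-direction swap followed by cuts. As written, the proposal is an accurate inventory of the obstacles together with an unsubstantiated claim that they can be overcome, so it cannot be accepted as a proof.
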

It is an exercise left to the reader to derive the following version of the above Theorem for definable subsets of dimension $<n$:

\begin{corollary}\label{pawluckicor}
Let $X\subseteq \mathcal{R}^n$ be definable of dimension $d<n$.  Then there are $S_1 , \dots ,S_k \subseteq \mathcal{R}^n$ such that $$X=_0 S_1 \dot\cup \dots \dot\cup S_k ,$$ and there are permutations of coordinates $\tau_1 , \dots ,\tau_k \colon \mathcal{R}^n \to \mathcal{R}^n$ such that each $\tau_i S_i$ is an $(i_1 , \dots ,i_n )$-cell with $i_j =1$ when $j \leq d$ and $i_j=0$ when $j>d$, and each $p^{n}_{d}\tau_i S_i$ is a standard $L$-cell and each $p^{n}_{m}\tau_i S_i$, where $m>d$, is the graph of an $L$-controlled function.  Furthermore, $L\in \mathcal{O}^{>}$ depends only on $n$.
\end{corollary}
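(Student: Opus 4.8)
The plan is to derive Corollary~\ref{pawluckicor} from Theorem~\ref{pawlucki} by decomposing $X$ into $d$-dimensional cells, selecting on each cell a coordinate projection that keeps the graph functions controlled, and then invoking Theorem~\ref{pawlucki} on the $d$-dimensional base. Since $\dim X = d$, here $=_0$ is to be read modulo definable sets of dimension $<d$; any such set may be discarded and absorbed into the final error term.

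First I would take a $C^1$ cell decomposition of $X$ and discard the cells of dimension $<d$, reducing to the case where $X$ is a single $C^1$ cell $C$ of dimension $d$. Permuting coordinates so that the free coordinates of $C$ come first, $C$ becomes the graph of a $C^1$ definable map $h=(h_{d+1},\dots ,h_n)\colon D \to \mathcal{R}^{n-d}$ over an open cell $D=p^{n}_{d}C\subseteq \mathcal{R}^d$.

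The heart of the argument is to re-choose the projection so that the graph functions become $L$-controlled. At a point $x\in C$ let $B(x)$ be an $n\times d$ matrix whose columns form a basis of the tangent space $T_xC$, and for a size-$d$ set $I\subseteq \{1,\dots ,n\}$ let $B_I$ denote the $d\times d$ submatrix on the rows indexed by $I$. I would partition $C$ according to which $I$ maximizes $|\det B_I(x)|$, breaking ties by a fixed ordering of subsets; this is a definable partition into at most $\binom{n}{d}$ pieces. On the piece where $I$ is maximal, the coordinates indexed by the complement of $I$ are, after the permutation carrying $I$ to $\{1,\dots ,d\}$, the graph over the $I$-coordinates of a map whose Jacobian is $B_{I^c}B_I^{-1}$. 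By Cramer's rule each entry of this matrix equals, up to sign, the ratio of a size-$d$ minor of $B$ to $\det B_I$, and maximality of $|\det B_I|$ forces each such ratio to have absolute value at most $1$. Hence every graph function is $1$-controlled. I expect this coordinate-selection step to be the main obstacle, as it is precisely the ingredient not handed to us by Theorem~\ref{pawlucki}; the remaining steps are bookkeeping.

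After a further cell decomposition we may assume each piece is the graph of a $1$-controlled $C^1$ map over an open cell $D'\subseteq \mathcal{R}^d$. I would then apply Theorem~\ref{pawlucki}, which is valid over $\mathcal{R}$ and in ambient dimension $d$, to $D'$, obtaining $D'=_0 T_1\dot\cup\dots\dot\cup T_r$ with each $T_s$ a standard $L_d$-cell after a permutation of the first $d$ coordinates, where $L_d$ depends only on $d$ and, being a standard bound, lies in $\mathcal{O}^{>}$. Restricting the graph map over each $T_s$ preserves $1$-controlledness, since permuting the base coordinates merely reindexes the partial derivatives, so each resulting piece is a cell of the required type: its projection under $p^{n}_{d}$ is a standard $L_d$-cell and each later coordinate is an $L_d$-controlled function of the free variables. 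Assembling all pieces over all the original $d$-cells, setting $L=\max\{1,L_1,\dots ,L_n\}$, and discarding the accumulated set of dimension $<d$ gives the decomposition with $L\in\mathcal{O}^{>}$ depending only on $n$.
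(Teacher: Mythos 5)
The paper leaves this corollary as an exercise for the reader, so there is no proof to compare against; your argument is correct and is surely the intended derivation: reduce to $C^1$ cells, use the maximal-minor/Cramer's-rule observation to select a coordinate projection under which the graph functions become $1$-controlled, and then apply Theorem~\ref{pawlucki} in the $d$-dimensional base, which yields a constant depending only on $d\leq n$. The only step worth spelling out is why the ``further cell decomposition'' produces pieces that are genuinely graphs over \emph{open} cells in the first $d$ permuted coordinates: on each maximality piece $\det B_I\neq 0$, so $p^{n}_{d}$ is a local diffeomorphism there, and hence no $d$-dimensional cell of the refined decomposition can have a bound coordinate among the first $d$ (its projection would then have dimension $<d$, contradicting local injectivity); with that remark included the proof is complete.
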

From now on we shall always assume that $L \in \mathcal{O}^{>}$.
We will refer to $(i_1 , \dots ,i_n )$-cells $C$ such that $i_j =1$ for all $j\leq d$ and $i_j = 0$ for $j>d$ and such that $p^{n}_{d}C$ is a standard $L$-cell and each $p^{n}_{m}C$, where $m>d$ is the graph of an $L$-function as \textbf{$L$-cells} of dimension $d$.
For definable $X, S_i \subseteq \mathcal{R}^n$ (or $R^n$) we say that $\{ S_i \}$ is an \textbf{almost partition} of $X$ if $\{ S_i \}$ is finite and $X=_0 \dot\bigcup_i S_i$.

\begin{definition}
Let $X \subseteq \mathcal{O}^n$ be $\mathcal{R}$-definable and $d$-thin.
Let $\{S_1,\ldots, S_k\}$ be an almost partition of $X$ and let $\tau_1 , \dots ,\tau_k \colon \mathcal{R}^n \to \mathcal{R}^n$ be permutations of coordinates such that each $\tau_i S_i$ is an $L$-cell. Then 
$$\nu_d X := \max \{ \nu p^{n}_{d} \tau_i S_i : 1 \leqslant i \leqslant k \}.$$ 
\end{definition}
To show that the definition of $\nu_d$ makes sense, we must show that it does not depend on the choice of $S_i$ and $\tau_i$.
Note that if $\dim(X) < d$ then $\nu_d(X) = 0$.

Below, $\det J \phi (x)$ is the Jacobian determinant of $\phi$ at $x$.

\begin{definition}
Let $X,Y$ be $n$-dimensional definable subsets of $\mathcal{R}^n$.
We say that a map $\phi$ is a \textbf{weak isomorphism} $X \to Y$ if $\phi \colon U \to V$, where $U,V \subseteq \mathcal{R}^n$ are open, is a definable $C^1$-diffeomorphism, $X\subseteq_0 U$, $Y \subseteq_0 V$, $\phi (X\cap U) =_0 Y$, and $\frac{1}{L}\leq |\det J \phi (x)| \leq  L$ for  almost all $x\in U$, where $L \in \mathcal{O}^{>}$.
We say that $X$ and $Y$ are \textbf{weakly isomorphic} if there is a definable weak isomorphism $X \to Y$.
\end{definition}
We now show that $\nu$ is invariant under weak isomorphisms on thin sets.
The proof is a slight variant of the proof of Theorem 5.4 in \cite{MS}.
For the sake of completeness we outline the argument here; see \cite{MS} for more details.
\begin{lemma}\label{inv}
Suppose $C,D \subseteq \mathcal{R}^n$ are definable, $n$-dimensional and $n$-thin.
If $C$ and $D$ are weakly isomorphic, then $\nu(C) = \nu(D)$.
\end{lemma}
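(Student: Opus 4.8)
The plan is to reduce the weak-isomorphism invariance of $\nu$ on thin sets to the volume-preservation statement that underlies Definition~\ref{defmeasure}(a), namely that $\nu C$ is the supremum (max) of the measures of boxes inscribed in the closure of a volume-normalized image $\tau_C C$ of the cell. The key observation is that $\nu$ on a thin cell is defined through the map $\tau_C$, which flattens the cell to one of "unit heights'' in each fiber while recording the fiberwise lengths, so that $\det J\tau_C(x)=1$ for almost all $x$. Thus the content of the lemma is that a weak isomorphism $\phi\colon C\to D$ can be absorbed into a comparison between $\tau_C C$ and $\tau_D D$ that distorts inscribed boxes by a Jacobian factor bounded between $1/L$ and $L$, and that this bounded distortion disappears after passing to the $\sim$-equivalence class in $\widetilde{\mathcal{O}}$.

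First I would reduce to the case where $C$ and $D$ are cells, using that $\nu$ is defined on general definable sets by summing over a cell partition and is independent of the partition chosen; a weak isomorphism restricts to a weak isomorphism on a cell decomposition compatible with $\phi$, so it suffices to treat cells. Second, I would invoke the defining formula for $\nu$ on a thin cell: $\nu C=\widetilde a$ where $a$ is the maximal volume of a box inscribed in $\cl{\tau_C C}$, and similarly for $D$. Third, and this is the heart of the argument, I would show that any box inscribed in $\cl{\tau_C C}$ can be transported, via the composite $\tau_D\circ\phi\circ\tau_C^{-1}$, to a region inside $\cl{\tau_D D}$ whose $\nu$-measure differs from that of the original box by at most a factor in $[1/L,L]$; since $L\in\mathcal{O}^>$ is a unit of the valuation ring with $\st L\neq 0,\infty$, such a bounded factor is killed under $\sim$, giving $\nu C\leq\nu D$, and the reverse inequality follows by symmetry from the inverse weak isomorphism. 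The tool for the Jacobian control is the change-of-variables estimate already built into the definition: the hypothesis $\frac{1}{L}\leq|\det J\phi(x)|\leq L$ for almost all $x$, together with $|\det J\tau_C|=|\det J\tau_D|=1$ a.e., makes the composite an $L$-bounded-distortion map.

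\textbf{The main obstacle} I expect is that a box inscribed in $\cl{\tau_C C}$ need not map to a box under the composite diffeomorphism, so one cannot directly compare inscribed boxes on the two sides. The proof of Theorem~5.4 in \cite{MS} handles exactly this issue, and I would follow its strategy: instead of transporting a single large box, one exhausts the relevant region by a fine grid of small subboxes (at a scale chosen small relative to the curvature of $\phi$), on each of which the diffeomorphism is approximately affine with linear part of determinant within $[1/L,L]$ of $1$; the images of these subboxes then contain, and are contained in, slightly dilated boxes, and summing the $\nu$-contributions (recalling that $+$ in $\widetilde{\mathcal{O}}$ is a max on infinitesimal-scale terms) reproduces the inscribed-box measure up to the $L$-factor. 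Because we work in the residue structure after applying $\st$ and the $\sim$-relation, the approximation errors and the bounded Jacobian factor both vanish in $\widetilde{\mathcal{O}}$, so the two suprema coincide. The remaining steps—checking that $\tau_C^{-1}$ is defined up to a lower-dimensional set, and that "almost all'' exceptional sets are harmless because they are $\subseteq_0$—are routine, and I would simply cite the corresponding verifications in \cite{MS}.
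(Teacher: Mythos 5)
Your overall framing is right—reduce to cells, use the inscribed-box definition of $\nu$ on thin sets, and exploit that a bounded multiplicative factor $L\in\mathcal{O}^{>}$ is absorbed by the relation $\sim$ on infinitesimals—but the step you correctly identify as ``the heart of the argument'' has a genuine gap, and your proposed repair does not close it. You want to say that the composite $\Phi=\tau_D\circ\phi\circ\tau_C^{-1}$, having Jacobian determinant in $[1/L,L]$ almost everywhere, distorts the $\nu$-measure of an inscribed box by at most a factor of $L$. That is a change-of-variables statement over $\mathcal{R}$, and no such theorem is available: $\mathcal{R}$ is a (highly non-archimedean) real closed field, $\nu$ is only a finitely additive inscribed-box volume with values in $\widetilde{\mathcal{O}}$, and the image $\Phi(P)$ of a box is not a box, so comparing $\nu(\Phi(P))$ with $\nu(P)$ is precisely the difficulty rather than a consequence of the Jacobian bound. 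Your grid-refinement fix is self-defeating for exactly the reason you parenthetically note: addition in $\widetilde{\mathcal{O}}$ is $\max$ on infinitesimal classes, so summing the contributions of the subboxes of a fine grid returns only the largest one and cannot reassemble the volume of the original box; moreover a grid at infinitesimal mesh has nonstandard-finitely many pieces, outside the scope of the finite additivity of $\nu$.

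The paper's proof supplies the missing mechanism. It argues by contradiction: assuming $\nu C>\widetilde{a}>\nu D$ for some $a\in\ma^{>}$, it traps $\tau_D\circ\phi(C)$ in a box $B$ with $\nu B<\widetilde{a}$ and finds a box $P\subseteq\tau_C C$ with $\nu P>\widetilde{a}$, then \emph{renormalizes}: a linear map $\theta$ with constant Jacobian $b$ (where $\widetilde{b}=\nu P$) carries $(0,1)^n$ onto $\interior{P}$, and a linear map $\hat\theta$ with Jacobian $1/b$ carries $B$ onto a box $P'$ of infinitesimal volume. The composite $\hat\theta\circ\Phi\circ\theta$ still has Jacobian in $[1/L,L]$, but now the comparison happens at unit scale, where Corollary 6.2 of \cite{thesispaper} applies: the map induces a genuine $C^1$-diffeomorphism of the \emph{standard parts}, off a set of dimension $<n$, with Jacobian bounded between $1/\st L$ and $\st L$. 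Since $\st[0,1]^n$ has nonempty interior while $\st P'$ does not, the real change of variables yields the contradiction. This renormalization-to-unit-scale step, together with the cited standard-part result, is what substitutes for the change-of-variables theorem you implicitly assumed; without it (or an equivalent), your argument does not go through.
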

\begin{proof}
Let $\phi : C \to D$ be a weak isomorphism.
We assume towards a contradiction that $\nu C > \nu D$.  Let $a\in \ma^{>}$ be such that $\nu D < \widetilde{a}<\nu C$.  We first reduce to the case when $D$ is an open cell and $C=\phi^{-1}(D)$.  The next reduction is to the case when $C$ is a cell and $\tau_D \circ 
\phi (C) \subseteq B$, where $B$ is a box with $\nu B < \widetilde a$.  Next, we find a box $P \subseteq \tau_C C$ with $\nu P > \widetilde{a}$.  We define $$\Phi= \tau_{D} \circ \phi \circ \tau_{C}^{-1}|_P  \colon P \to \Phi (P) \subseteq B.$$  For some $L \in \mathcal{O}^>$, $\frac{1}{L} \leqslant |\det J \Phi (x)| \leqslant L$ for almost all $x\in P$.

 Let $\theta \colon (0,1)^n \to \interior{P}$ be a $C^1$-diffeomorphism with $\det J \theta (x) = b$ for all $x\in (0,1)^n$, where $\widetilde{b}=\nu P$.  Let $\hat{\theta} \colon B \to P'$ be a $C^1$-diffeomorphism onto a box $P'$ such that $\det J \hat{\theta}(x) = \frac{1}{b}$ for all $x \in B$.  Then 
$$\hat{\theta } \circ \Phi \circ \theta \colon (0,1)^n \to \hat{\theta } \circ \Phi \circ \theta (0,1)^n \subseteq P'$$ is such that $$\frac{1}{L}\leqslant |\det \left( J ( \hat \theta \circ \Phi \circ  \theta )  (x) \right)| \leqslant L$$ for almost all $x\in [0,1]^n$.  By Corollary 6.2, p.17 in \cite{thesispaper}, $ \hat \theta \circ \Phi \circ  \theta$ induces a $C^1$-diffeomorphism $\st [0,1]^n \to \st P'$ outside of a subset of $\st [0,1]^n$ of dimension $<n$ with Jacobian determinant bounded by $\frac{1}{\st L}$ and by $\st L$.  But  $\interior{\st P'} = \emptyset$, which is impossible.
\end{proof}
We now show that $\nu_d$ is well-defined.
\begin{lemma}
Let $X\subseteq \mathcal{O}^n$ be definable and $d$-thin.  Let $\{X_i \}$ and $\{ Y_j \}$ be almost partitions of $X$, and let $\tau_i , \tau'_j \colon \mathcal{R}^n \to \mathcal{R}^n$ be permutations of coordinates such that all $\tau_i X_i$ and $\tau'_j Y_j$ are $L$-cells.  Then 
$$\max_i \{ \nu p^{n}_{d} \tau_i X_i   \}=\max_j \{ \nu p^{n}_{d} \tau'_j Y_j   \}.$$
\end{lemma}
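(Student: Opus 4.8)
The plan is to reduce to a comparison between a single almost partition and a common refinement, and then to settle the resulting special case by invoking the weak-isomorphism invariance of $\nu$ (Lemma~\ref{inv}) together with the fact that in the semiring $\widetilde{\mathcal{O}}$ the sum of infinitesimal elements is their maximum. First I would pass to a common refinement: setting $Z_{ij} := X_i \cap Y_j$ and applying Corollary~\ref{pawluckicor} to each $Z_{ij}$, I obtain a single almost partition $\{W_m\}$ of $X$ into $L$-cells, with permutations $\sigma_m$, that refines both $\{X_i\}$ and $\{Y_j\}$. It then suffices to treat one $L$-cell $C$ together with an almost partition $\{C_m\}$ of $C$ into $L$-cells (with permutations $\sigma_m$) and to prove
\[ \nu\, p^{n}_{d} \tau C \;=\; \max_m \nu\, p^{n}_{d} \sigma_m C_m, \]
where $\tau$ makes $C$ an $L$-cell. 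Granting this, the $W_m$ contained in a fixed $X_i$ form an almost partition of $X_i$, whence $\nu\, p^{n}_{d}\tau_i X_i = \max\{\nu\, p^{n}_{d}\sigma_m W_m : W_m \subseteq X_i\}$; taking the maximum over $i$ gives $\max_i \nu\, p^{n}_{d}\tau_i X_i = \max_m \nu\, p^{n}_{d}\sigma_m W_m$, and the same computation for $\{Y_j\}$ yields the asserted equality.

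The heart of the argument is the displayed special case. After applying $\tau$ I may assume $\tau = \mathrm{id}$, so that $C$ is a graph of $L$-controlled functions over the standard $L$-cell $U := p^{n}_{d} C \subseteq \mathcal{R}^d$ and $\nu\, p^{n}_{d}\tau C = \nu(U)$. The projection $p^{n}_{d}|_C \colon C \to U$ is a definable homeomorphism, so the images $p^{n}_{d} C_m$ form an almost partition of $U$. I first claim that the choice of projection is immaterial: for each $m$ the sets $p^{n}_{d} C_m$ and $p^{n}_{d}\sigma_m C_m$ are $d$-dimensional, $d$-thin subsets of $\mathcal{R}^d$ (the thinness following from the $d$-thinness of $X$, as both are orthogonal projections of $C_m \subseteq X$), and the map $p^{n}_{d}\sigma_m \circ (p^{n}_{d}|_{C_m})^{-1}$ is a weak isomorphism $p^{n}_{d} C_m \to p^{n}_{d}\sigma_m C_m$. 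Hence by Lemma~\ref{inv}, applied with ambient dimension $d$, one gets $\nu(p^{n}_{d} C_m) = \nu\, p^{n}_{d}\sigma_m C_m$.

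It remains to assemble these. Since $U$ is $d$-thin, $\nu(U)$ and each $\nu(p^{n}_{d} C_m)$ are classes of elements of $\ma^{\geqslant}$. The additivity of $\nu$ over partitions into cells (Definition~\ref{defmeasure}), together with the vanishing of $\nu$ on sets of dimension $<d$, gives $\nu(U) = \sum_m \nu(p^{n}_{d} C_m)$; and because the sum of infinitesimal elements in $\widetilde{\mathcal{O}}$ equals their maximum, this is $\max_m \nu(p^{n}_{d} C_m) = \max_m \nu\, p^{n}_{d}\sigma_m C_m$. Combining with $\nu\, p^{n}_{d}\tau C = \nu(U)$ proves the special case, and hence the lemma.

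The main obstacle is verifying that $p^{n}_{d}\sigma_m \circ (p^{n}_{d}|_{C_m})^{-1}$ is a weak isomorphism, that is, that its Jacobian determinant lies between $1/L'$ and $L'$ for some $L'$ depending only on $n$ and $L$. The upper bound is routine, since parametrizing $C_m$ over $p^{n}_{d} C_m$ as a graph of $L$-controlled functions bounds all partial derivatives, and hence the determinant, in terms of $n$ and $L$. The lower bound is the delicate point and is exactly where the full $L$-cell structure is used: because $C_m$ is also an $L$-cell via $\sigma_m$, it is simultaneously a graph of $L$-controlled functions over $p^{n}_{d}\sigma_m C_m$. Thus the tangent $d$-plane to $C_m$ at almost every point is a graph of slope at most $L$ over both coordinate $d$-frames, so each of the two projections distorts $d$-dimensional volume by a factor bounded away from $0$ and $\infty$ in terms of $n$ and $L$ alone; the determinant of the change of projection, being the ratio of these two factors, is therefore controlled on both sides. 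This two-sided bound, rather than the mere nonvanishing of the Jacobian, is precisely what permits the appeal to Lemma~\ref{inv}.
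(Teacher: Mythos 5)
Your proof is correct and follows essentially the same route as the paper's: pass to a common refinement, observe that the change-of-projection map between the two images of each refinement piece is a weak isomorphism, and conclude via Lemma~\ref{inv} together with the fact that finite sums of infinitesimal classes in $\widetilde{\mathcal{O}}$ are maxima. The only difference is one of detail: you spell out the two-sided Jacobian bound coming from the $L$-controlled graph structure, which the paper asserts without elaboration.
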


\begin{proof}
Let $\mathcal{Z}=\{ Z_1,\ldots, Z_m \}$ be an almost partition of $X$ containing an almost partition of each $X_i $ and $Y_j $.
For each $i$ we have $$\nu p^{n}_{d} \tau_i X_i = \max\{  \nu p^{n}_{d}\tau_i Z_j \colon j\in \{ 1,\dots, m  \} \mbox{ and } Z_j \subseteq X_i  \}.$$  Hence
$$\max_i \{ \nu p^{n}_{d} \tau_i X_i  \}=\max \{ \nu p^{n}_{d} \tau_i Z_{j} : 1 \leqslant j \leqslant m \}$$ and
it suffices to see that for $Z_k \subseteq X_i \cap Y_j$ of dimension $d$, $\nu p^{n}_{d} \tau_i Z_k = \nu p^{n}_{d} \tau'_j Z_k$.  But this follows from the map $\phi \colon p^{n}_{d}\tau_i Z_k \to p^{n}_{d}\tau'_j Z_k$ given by $p^{n}_{d} \tau_i (x)\mapsto p^{n}_{d}\tau'_j (x)$, where $x\in Z_k$, being a weak isomorphism and from Lemma \ref{inv}.


\end{proof}

\smallskip\noindent
\begin{definition}
Let $M$ be an o-minimal field and
$X$ a definable subset of $M^n$.  Let $\mathcal{B}=\{ B_1 ,\dots , B_k \}$ be a collection of pairwise disjoint boxes in $M^n$. We say that $\mathcal{B}$ is an {\em inner approximation of $X$\/} if $B_i \subseteq X$ for each $i$.   We say that $\mathcal{B}$ is an {\em outer approximation of $X$\/} if $X\subseteq_0 \bigcup_{i=1}^{k}B_i$. The volume of $\mathcal{B}$ is $\sum_{i=1}^{k} \mu B_i$.
\end{definition}
The lemma below is Lemma 4.1 in \cite{thesispaper}:
\begin{lemma}\label{Vbox}
Let $X\subseteq \mathcal{O}^n$ be definable and $n$-fat.  Then there is a box $$[a_1 , b_1 ] \times  \dots \times [a_n , b_n ] \subseteq \closure{X}$$ with $a_1 , b_1 ,  \dots , a_n , b_n  \in \mathbb{Q}$.
\end{lemma}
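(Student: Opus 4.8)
The plan is to reduce to a single Lipschitz-controlled cell and there to build the box one coordinate at a time. Call a box $\prod_{i=1}^{m}(u_i,v_i)\subseteq\mathcal{R}^m$ with $u_i,v_i\in\mathcal{O}$ \emph{macroscopic} if $\st u_i<\st v_i$ for every $i$. Two elementary facts drive everything. (a) If $\prod_i(u_i,v_i)$ is macroscopic then for any rationals $a_i,b_i$ with $\st u_i<a_i<b_i<\st v_i$ we have $u_i<a_i$ and $b_i<v_i$, so the closed rational box $\prod_i[a_i,b_i]$ lies inside $\prod_i(u_i,v_i)$; moreover, if $Z$ is definable of dimension $<m$ then any macroscopic rational box $K$ contains a macroscopic rational sub-box disjoint from $Z$, since $\st K$ is a box in $\mathbb{R}^m$, the set $\st K\setminus\st Z$ has nonempty interior and hence contains a closed rational box, which lifts back by (b). (b) For bounded definable $Y$ the standard part commutes with coordinate projections, $\st Y$ is closed, and $\dim\st Y\le\dim Y$; in particular the standard part of a set of dimension $<m$ has empty interior in $\mathbb{R}^m$. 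By (a) it suffices to produce a macroscopic box inside $X$.

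By (b), $\interior{\st(X)}\neq\emptyset$ gives $\dim X=n$, so $X$ has nonempty interior in $\mathcal{R}^n$. First I would reduce to a standard $L$-cell. Decompose $X$ into finitely many cells; since $\st X$ is the union of the closed sets $\st C_j$, Baire category yields a cell $C\subseteq X$ with $\interior{\st(C)}\neq\emptyset$. Applying the $\mathcal{R}$-version of Theorem~\ref{pawlucki} to the open set $C$ (with the standing convention $L\in\mathcal{O}^{>}$) and using Baire once more on the resulting almost partition, I obtain an $n$-fat standard $L$-cell $S$ with $\dim(S\setminus C)<n$. It then suffices to find a macroscopic box inside $S$: by the second part of (a) I may afterwards shrink it to a macroscopic rational box avoiding $S\setminus C$, which therefore lies in $C\subseteq X\subseteq\closure{X}$.

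The core is to construct, by induction on $k=0,\dots,n$, a macroscopic closed rational box $K_k\subseteq p^{n}_{k}S$ satisfying the invariant that $\st(S\cap(K_k\times\mathcal{R}^{n-k}))$ has nonempty interior in $\mathbb{R}^n$. For $k=0$ this is the $n$-fatness of $S$, and for $k=n$ it gives the desired $K_n\subseteq S$. For the step, let $f,g\colon p^{n}_{k}S\to\mathcal{O}$ be the $L$-controlled bounding functions of coordinate $k+1$. I fix an interior point $\xi$ of $\st(S\cap(K_k\times\mathcal{R}^{n-k}))$ and a point of $S$ lying over it with base $x_0\in K_k$. Because $f,g$ are $L$-controlled with $L\in\mathcal{O}$, the values $\st f,\st g$ depend only on the standard part of the argument; this makes $\st f(x_0),\st g(x_0)$ well defined and, comparing with the $(k+1)$st coordinate of a neighborhood of $\xi$, forces $\st g(x_0)-\st f(x_0)$ to exceed a positive standard constant (otherwise the standard part would lie in the graph of a function and have empty interior). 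The same Lipschitz bound shows that over a sub-box of $K_k$ about $x_0$ of small but standard side length the oscillations of $f$ and $g$ stay below this gap, so a rational slab $[a_{k+1},b_{k+1}]$ straddling $\xi_{k+1}$ fits strictly between $\sup f$ and $\inf g$; setting $K_{k+1}$ to be this sub-box times $[a_{k+1},b_{k+1}]$ produces a macroscopic rational box, and the same closeness estimates keep a neighborhood of $\xi$ inside the standard part, so the invariant survives.

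The main obstacle is precisely this propagation of the invariant: over a macroscopic base box the bounding functions of a bare cell may oscillate macroscopically, so no horizontal slab need fit and the interior of the standard part can be lost upon restriction. This is exactly what the reduction to standard $L$-cells with $L\in\mathcal{O}$ overcomes — uniform Lipschitz control bounds the oscillation of $f,g$ by $L$ times the (standard) side length, holding it below the macroscopic gap while the base box remains macroscopic, and it forces $\st f,\st g$ to factor through the standard part, which is what preserves nonempty interior at each stage. With $K_n\subseteq S$ in hand, shrinking as above places a closed rational box inside $C\subseteq X\subseteq\closure{X}$, as required.
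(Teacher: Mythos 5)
The paper does not actually prove this lemma: it is quoted as Lemma 4.1 of \cite{thesispaper}, so any argument you give is automatically a different route. Your proof is essentially correct, and it has the merit of staying inside the toolkit already present in this paper: you reduce via cell decomposition and the $\mathcal{R}$-version of Theorem~\ref{pawlucki} to a single $n$-fat standard $L$-cell $S$ with $\dim(S\setminus C)<n$ for a cell $C\subseteq X$, and then build the rational box coordinate by coordinate, using the $L$-control of the bounding functions both to force a standard-sized gap $\st g(x_0)-\st f(x_0)>0$ over the current base box and to keep the oscillation of $f,g$ below that gap over a rational sub-box of small standard side length; the propagation of the ``nonempty interior of the standard part'' invariant and the final shrinking past the lower-dimensional set $S\setminus C$ are sound. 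Three points should be tightened. First, for $\st f(x_0)$ and $\st g(x_0)$ to be defined you need $S\subseteq\mathcal{O}^n$; this does hold, because $S\setminus\mathcal{O}^n$ is open (as $S$ is open and $\mathcal{O}$ is convex) and is contained in the lower-dimensional set $S\setminus C$, hence empty --- but it is not part of the statement of Theorem~\ref{pawlucki} and must be said. Second, ``$L$-controlled'' only bounds partial derivatives almost everywhere on a possibly non-convex domain, so your blanket claim that $\st f$ factors through the standard part of the argument needs either the quasi-convexity of standard $L$-cells or, more simply, the observation that every pair of points you actually compare lies in a common box $K_k\subseteq p^{n}_{k}S$, which is convex, so continuity of the derivative plus the mean value inequality along the segment suffices. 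Third, you invoke that the standard part of a bounded definable set is closed and that $\dim\st Z\le\dim Z$; these are themselves results of \cite{thesispaper}, so your argument is independent of that paper's Lemma 4.1 but not of the paper itself. With those caveats the proof goes through.
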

When dealing with $R$-definable families, we shall not make a notational distinction between their realization in $R$ and in $\mathcal{R}$, as which one is meant will always be clear from the context.

\begin{lemma}\label{defapprox}
Let $A=\{ A_t : t\in \mathbb{R}^>  \}$ be a definable family of open subsets of $\mathbb{R}^n$ such that $\lambda (A_t ) \to 0$ as $t\to 0^+$.
\begin{itemize}
\item[a)] Then there is a definable function $h \colon (0,a)_{\mathcal{R}}\to \mathcal{R}^{>}$, where $a\in \mathbb{R}^{>}$, such that each $h (t)$ is the volume of an inner approximation of $A_t$ and $\widetilde{h (t)} = \nu (A_t )$ for all $t\in \ma^{>}$.

\item[b)] 
If $G\colon \mathcal{R} \to \mathcal{R}$ is definable such that $\nu A_{t }< \widetilde{G(t )}$ when $t\in \ma^{>}$, then there is a definable function $H \colon (0,a)_{\mathcal{R}}\to \mathcal{R}^{>}$, where $a\in \mathbb{R}^{>}$, such that each $H (t)$ is the volume of an outer approximation of $A_t$ and $$\nu (A_t )\leqslant \widetilde{H (t)}<\widetilde{G(t)}$$ for all $t\in \ma^{>}$.
\end{itemize}
\end{lemma}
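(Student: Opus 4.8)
The plan is to prove both parts by decomposing each $A_t$ into finitely many cells, uniformly in $t$, and building the required box approximations cell by cell. First I would apply Corollary~\ref{pawluckicor} uniformly in the parameter (shrinking to a real interval $(0,a)$ if necessary) to obtain an almost partition $A_t =_0 S_1(t) \dot\cup \dots \dot\cup S_N(t)$ with $N$ independent of $t$, where after a fixed permutation of coordinates each $S_i(t)$ is an $n$-dimensional $L$-cell with $L \in \mathcal{O}^{>}$ depending only on $n$. Since the family is defined over $\mathbb{R}$, the cell data (the boundary functions) are definable functions of $t$, and all later choices can be made definably in $t$ using definable choice. The key observation is that for infinitesimal $t$ every such cell is $n$-thin: $\lambda(A_t) \to 0$ as $t \to 0^+$ transfers to $\lambda(A_t) \in \ma$ for $t \in \ma^{>}$, while any $n$-fat cell would contain, by Lemma~\ref{Vbox}, a rational box of positive standard volume and hence have non-infinitesimal Lebesgue measure. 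Thus for $t \in \ma^{>}$ we have $\nu(A_t) = \sum_i \nu(S_i(t))$ with each summand infinitesimal, so by the semiring structure on $\widetilde{\mathcal{O}}$, where infinitesimals add as a maximum, this sum equals $\max_i \nu(S_i(t))$.

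For part (a) I would construct, for each cell, a uniformly bounded collection of disjoint boxes inside it whose total volume has class exactly $\nu(S_i(t))$. Fix a cell $C = S_i(t)$ and let $P \subseteq \closure{\tau_C C}$ be a box of volume $a$ realizing the maximum in Definition~\ref{defmeasure}(a), so $\widetilde{a} = \nu C$, with the corners of $P$ chosen definably in $t$. Since $\tau_C$ is triangular with unit Jacobian it is volume preserving, and I would transport $P$ back through $\tau_C^{-1}$ and inscribe in $\tau_C^{-1}(P) \subseteq \closure{C}$ a fixed number $K = K(n,L)$ of disjoint honest boxes. The point is that the boundary curvature of $\tau_C^{-1}(P)$ is controlled by the $L$-bounds on the defining functions, so subdividing the first coordinate of $P$ into finitely many pieces of length comparable to $1/L$ and trimming each later coordinate by at most $L$ times the preceding side length produces disjoint boxes inside $C$ whose volumes form a Riemann sum capturing a fixed positive fraction of $a$. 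Because $\sim$ is logarithmically coarse — any positive standard multiple of an infinitesimal is $\sim$-equivalent to it — this fraction suffices, so the total volume $v_i(t)$ of these boxes satisfies $\widetilde{v_i(t)} = \widetilde{a} = \nu C$. Setting $h(t) = \sum_i v_i(t)$ gives a definable inner-approximation volume with $\widetilde{h(t)} = \max_i \widetilde{v_i(t)} = \max_i \nu(S_i(t)) = \nu(A_t)$ for $t \in \ma^{>}$, as required.

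For part (b) I would instead cover each cell from outside. Working again through the unit-Jacobian map $\tau_C$ and the $L$-control on the boundary functions, I would cover $C$, up to a set of dimension $< n$, by a grid of disjoint boxes whose total volume is comparable, up to a dimension-dependent standard constant, to $\lambda(C)$; combining over cells yields an outer approximation of $A_t$ of definable volume $H(t)$ with $\lambda(A_t) \leqslant H(t)$. The conceptual input here, which I would extract from \cite{MS} (it is the comparison underlying Definition~\ref{defmeasure} and Lemma~\ref{inv}), is that for a single cell the maximal inscribed box captures the Lebesgue measure up to a bounded factor, i.e. $\widetilde{\lambda(C)} = \nu(C)$; granting this, $\widetilde{H(t)} = \max_i \widetilde{\lambda(S_i(t))} = \max_i \nu(S_i(t)) = \nu(A_t)$. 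Since an outer approximation covers $A_t$ we automatically have $\nu(A_t) \leqslant \widetilde{H(t)}$, and the hypothesis $\nu(A_t) < \widetilde{G(t)}$, which is a genuine gap on the logarithmic scale, leaves room to fix the grid fineness definably in $t$ so that $\nu(A_t) \leqslant \widetilde{H(t)} < \widetilde{G(t)}$ holds throughout $\ma^{>}$.

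The routine bookkeeping is the passage from finite sums of infinitesimal volumes to maxima in $\widetilde{\mathcal{O}}$ and the insensitivity of $\sim$ to bounded-factor perturbations, which absorb the constant losses inherent in inscribing or circumscribing boxes. The main obstacle I anticipate is the geometric core of the two middle paragraphs: producing honest boxes — inscribed for (a), circumscribed for (b) — inside or around the curved region $\tau_C^{-1}(P)$, respectively the cell $C$, uniformly and definably in $t$, with the distortion controlled solely through the $L$-cell estimates of Corollary~\ref{pawluckicor}, together with the per-cell comparison $\widetilde{\lambda(C)} = \nu(C)$ that keeps the outer estimate in (b) at $\nu(A_t)$ rather than overshooting. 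Both are one-cell statements that I would isolate as sublemmas and reduce to the constructions already carried out in \cite{MS} for the definition of $\nu$ and its invariance under weak isomorphism.
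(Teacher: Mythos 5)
Your part a) has the right skeleton---decompose $A_t$ uniformly into cells, note via Lemma~\ref{Vbox} that every piece is $n$-thin for $t\in\ma^{>}$, and combine the pieces through the max structure on infinitesimal classes in $\widetilde{\mathcal{O}}$; the paper does the same, using an ordinary cell decomposition of $A\subseteq\mathbb{R}^{1+n}$ rather than Corollary~\ref{pawluckicor}. But your geometric core is false: one cannot inscribe a \emph{bounded} number $K(n,L)$ of honest boxes in an $L$-cell capturing a fixed positive fraction of the maximal-box volume $a$ of $\tau_C C$. Take $A_t=\{(x_1,x_2): 0<x_1<1,\ x_1<x_2<x_1+t\}$, a single standard $L$-cell with $L=1$ and $\lambda(A_t)=t$. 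For $\epsilon\in\ma^{>}$ we have $\tau_{A_\epsilon}A_\epsilon=(0,1)\times(0,\epsilon)$, so $\nu(A_\epsilon)=\widetilde{\epsilon}$; yet any box $[a,b]\times[c,d]\subseteq A_\epsilon$ forces $b<c\leqslant d<a+\epsilon$, hence has volume $<\epsilon^2$, so $k$ disjoint boxes capture at most $k\epsilon^2$, whose class is $\widetilde{\epsilon^2}<\widetilde{\epsilon}$. The $L$-control bounds the slope of the shear, not the ratio of the cell's length to its thickness, and it is that ratio which dictates how many boxes you need (here about $1/\epsilon$, not a standard number). The paper avoids the issue: it sets $h_i(t)$ equal to the maximal box volume inside $\closure{(\tau_{D_i}D_i)_t}$, which is $\nu((D_i)_t)$ by definition, uses o-minimality to see that one index dominates on all of $\ma^{>}$, and the only inequality needed downstream, $h(t)\leqslant\lambda A_t$, comes from $\tau$ being volume-preserving rather than from honest inscribed boxes. (Read in this light, ``volume of an inner approximation'' in the statement should be understood up to a volume-preserving definable shear.)

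Part b) is where the real difficulty sits, and your plan does not survive the same example. Any standard-finite family of disjoint boxes covering $A_\epsilon$ above up to a set of dimension $<2$ must contain a box whose first-coordinate extent is at least $1/(2k)$, and such a box then has second-coordinate extent at least $1/(2k)-\epsilon$; so \emph{every} honest finite outer approximation of $A_\epsilon$ has non-infinitesimal volume even though $\nu(A_\epsilon)=\widetilde{\epsilon}$, and no grid of standard-finitely many boxes has volume comparable to $\lambda(A_\epsilon)$. Moreover your target $\widetilde{H(t)}=\nu(A_t)$ is stronger than what the lemma claims: the auxiliary function $G$ appears in the statement precisely because the construction can only force $\widetilde{H(t)}$ strictly below an arbitrary prescribed class above $\nu(A_t)$, not equal to $\nu(A_t)$. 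The paper's proof of b) is an induction on the ambient dimension: after reducing (again via a shear) to a cell of the form $(0,f)$, it invokes the slicing construction from the proof of Theorem 4.8 of \cite{MS} to produce definable levels $y_0(t)<\dots<y_l(t)$ with $\widetilde{h(t)}\leqslant\sum_i\widetilde{y_i(t)}\cdot\nu(f_t^{-1}[y_{i-1}(t),y_i(t)])<\widetilde{h(t)}^p$, outer-approximates each slice by the inductive hypothesis, and takes products with $[0,y_i(t)]$; none of this machinery is anticipated in your outline. Finally, the ``conceptual input'' $\widetilde{\lambda(C)}=\nu(C)$ that you propose to import is essentially the conclusion of this lemma combined with Proposition~\ref{limitequals1}, so assuming it is circular unless you can point to an independent proof in \cite{MS}.
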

\begin{proof}
Without loss of generality, we may assume that $A_t \subseteq [0,1]^n$ for each $t$.
First note that $\interior{\st A_t }=\emptyset$ for all $t\in \ma^{>}$:
Suppose towards a contradiction that $t\in \ma^{>}$ is such that $\interior{\st A_t } \not=\emptyset$.
By Lemma \ref{Vbox} there is a box $B\subseteq A_t$ such that all vertices have rational coordinates.
For all sufficiently small $s \in \mathbb{R}^>$, $B \subseteq A_s$,
contradiction with $\lambda A_t \to 0$ as $t\to 0^+$ and $\lambda B > 0$.

To prove a), 
we let $\mathcal{D}$ be a decomposition of $\mathbb{R}^{1+n}$ into cells that partitions $A$. Let $D\in \mathcal{D}$ be such that $D\subseteq A$ and $p^{n+1}_{1}D=(0,a)$ for some $a\in \mathbb{R}^{>}$.  Let $D_1 , \dots ,D_k$ be the open cells in $\mathcal{D}$ with $D_i \subseteq A$ and $p^{n+1}_{1}D_i = (0,a)$.  Then $\bigcup_{i=1}^{k}(D_i )_t =_0 A_t$ for all $t\in \ma^{>}$, and there is $i\in \{ 1,\dots ,k \}$ such that $\nu A_t = \nu ((D_i )_t )$ for all $t\in \ma^{>}$:  
Define $h_{i}\colon (0,a)\to [0,1]$ by $$h_{i}(t)=\sup\{ \Pi_{j=2}^{n+1} (b_j - a_j ):\, [a_2 , b_2 ] \times \dots \times [a_{n+1},b_{n+1}]\subseteq  \closure{\big(\tau_{D_{i}} D_{i}\big)_t} \},$$  hence $\nu ((D_{i})_t ) = \widetilde{h_{i}(t)}$ for all $t\in \ma^{>}$.
Since the functions $h_{i}\colon (0,a)\to [0,1]$ are $R$-definable, if $h_{i}(t ) < h_{j}(t )$ for some $t\in \ma^{>}$, then $h_{i}(t ) <  h_{j}(t)$ for all $t\in \ma^{>}$. It follows that for some $i\in \{ 1, \dots ,k   \}$, $\nu A_t = \widetilde{h_{i} (t)}$ for all $t\in \ma^{>}$.  This finishes part a) of the lemma.

\smallskip

To prove b), let $G\colon [0,1]\to [0,1]$ be $R$-definable with $\nu A_{t }< \widetilde{G(t )}$ for $t\in \ma^{>}$.  Without loss of generality, we assume that $A$ is an open cell of the form $(0,f)$.  
The proof is by induction on $n$.  

If $A\subseteq R^{1+1}$, then $A_t = (0,f(t))\subseteq R$ for each $t\in \ma^{>}$, so part b) of the lemma is obvious.
Now suppose the lemma holds for $n \geqslant 1$, and let $A\subseteq R^{1+(n+1)}$.  Let $h$ be as in part a) of the lemma.  Let $\epsilon \in \ma^{>}$.  Then $\nu A_{\epsilon }=\widetilde{h (\epsilon )}$.  Now $\widetilde{h (\epsilon )} < \widetilde{G(\epsilon )}$ implies that for some $p \in \mathbb{Q}^{>}$, $p<1$, $$\widetilde{h (\epsilon )}<\widetilde{h (\epsilon )}^p <\widetilde{G(\epsilon )}.$$  By the proof of the subclaim in Case 1.1 in the proof of Theorem 4.8 in \cite{MS},  there is $l$, depending only on $p$, and there are $R$-definable functions $y_0 , \dots ,y_l \colon (0,a) \to [0,1]$, where $a \in \mathbb{R}^{>}$, such that $$0=y_0 (t)< y_1 (t)< \dots < y_l (t)=1$$ and $$\widetilde{h (t)}\leqslant \sum_{i=1}^{l} \widetilde{y_i (t)}\cdot \nu ( f^{-1}_{t }[y_{i-1}(t),y_i (t)] ) < \widetilde{h (t )}^p $$ for all $t\in \ma^{>}$.  (In the notation of the proof of Theorem 4.8 \cite{MS}, $y_i (t) = h (t)^{(l-i-1)q_3}$ where $i\in \{ 1, \dots ,l-1  \}$, and $q_3 \in \mathbb{Q}^{>}$ depends only on $p$.)

We first find, for each $i$, an $R$-definable function $H_i$ such that $$\widetilde{y_i (t)} \nu f^{-1}_{t}[y_{i-1},y_i ] \leq \widetilde{H_i (t)}<\widetilde{h(t)}^p $$ on $\ma^{>}$, and $H_i (t)$ is the volume of an outer approximation of $f_{t}^{-1}[y_{i-1},y_i ] \times [0,y_i ]$ on $(0,a)$, where $a\in \mathbb{R}^{>}$.
\begin{itemize}
\item Let $i=l$.  Then for all $t\in \ma^{>}$, 
$$\nu f^{-1}_{t}[y_{l-1}(t),y_l (t)]< \widetilde{h (t)}^p .$$
Thus, inductively, there is an $R$-definable function $H_i \colon (0,a) \to [0,1]$, where $a\in \mathbb{R}^{>}$, such that $$\nu f_{t}^{-1}[y_{l-1}(t),y_l (t)] \leqslant \widetilde{ H_i (t) }< \widetilde{h (t)}^p$$ for all $t\in \ma^{>}$, and so that each $H_i (t)$ is the volume of an outer approximation of $f_{t}^{-1}[y_{l-1}(t),y_l (t)]$.
Then $$\widetilde{y_i (t)} \cdot \nu f_{t}^{-1}[y_{i-1}(t),y_i (t)] \leqslant \widetilde{y_i (t)} \cdot \widetilde{ H_i (t) } = \widetilde{H_i (t)}< \widetilde{h (t)}^p $$ for all $t\in \ma^{>}$.  Furthermore, $y_i (t) \cdot H_i (t)$ is the volume of an outer approximation of $f^{-1}_{t}[y_{l-1}(t),y_l (t)]\times [0,y_{i}(t)]$ for all $t\in (0,a)$.

\item Let $i$ be such that $y_i (t) \in \ma^{>}$ and $\nu f^{-1}_{t}[y_{i-1}(t),y_i (t)] < \widetilde{\ma^{>}}$ for some (hence all) $t\in \ma^{>}$.  Then the function assigning 1 to each $t\in (0,1)$ is the volume of an outer approximation of $f^{-1}_{t}[y_{i-1}(t),y_i (t)]$ for all $t\in (0,1)$, and $H_i (t) = y_i (t)$ is as required.

\item Suppose $y_i (t)\in \ma^{>}$ and $\nu f_{i}^{-1}[y_{i-1}(t),y_i (t)] \in \widetilde{\ma^{>}}$ for $t\in \ma^{>}$.  Let $q\in \mathbb{Q}^{>}$ be such that $$\widetilde{y_i (t)} \cdot \nu f_{t}^{-1}[y_{i-1}(t),y_{i}(t)]<\widetilde{h}^{q}<\widetilde{h}^p .$$ 
We shall find an $R$-definable function $d$ with $d(t)\in \ma^{>}$, $\nu f^{-1}_{t}[y_{i-1}(t),y_i (t)]<\widetilde{d(t)}$, and $$\widetilde{y_i (t)}\cdot \nu f_{t}^{-1}[y_{i-1}(t),y_i (t)] \leqslant \widetilde{y_i (t)}\cdot \widetilde{d(t)}\leqslant \widetilde{h (t)}^q $$ for all $t\in \ma^{>}$.



As mentioned above, for each $i\in \{1,\dots ,l-1   \}$, $y_i (t)=h (t)^{r}$ for some $r\in \mathbb{Q}^{>}$.  
Note that either 
$\widetilde{h (t)}^{q-r} \in \ma^{>}$ for all $t\in \ma^{>}$, or  $\widetilde{h (t)}^{q-r}>\widetilde{\ma^{>}}$ for all $t\in \ma^{>}$.  In the first case, we may set $d (t)=h (t)^{q-r}$.  In the second case, we set $d (t)=\sqrt{b(t)}$, where $\widetilde{b(t)}=\nu f^{-1}_{t}[y_{i-1}(t),y_i (t)]$ for all $t\in \ma^{>}$.

Inductively, we now obtain an $R$-definable function $H_i \colon (0,a) \to [0,1]$ such that $$\widetilde{y_i (t)}\cdot \nu f^{-1}_{t}[y_{i-1}(t),y_i (t)] \leqslant \widetilde{y_i (t)}\cdot \widetilde{H_i (t)} < \widetilde{y_i (t)} \cdot \widetilde{d(t)}\leqslant \widetilde{h (t)}^p $$ for all $t\in \ma^{>}$, and so that $y_i (t)\cdot H_i (t)$ is an upper approximation of $$f^{-1}_{t}[y_{i-1}(t),y_i (t)]\times [y_{i-1}(t),y_i (t)]$$ for all $t\in (0,a)$, where $a\in \mathbb{R}^{>}$.

Now $H(t)=\sum_{i=1}^{l} y_i (t) \cdot H_i (t) $ is an upper approximation of $A_t$ for all $t\in (0,a)$, where $a \in \mathbb{R}^{>}$, and 
$$\nu A_t \leqslant \widetilde{H(t)}< \widetilde{G(t)}$$ for all $t\in \ma^{>}$.

\end{itemize}

\end{proof}

\begin{proposition}\label{limitequals1}
Let $\{A_t \colon t\in \mathbb{R}^{>}\}$ be a definable family of open subsets of $\mathbb{R}^n$
such that $\lambda A_t \to 0$ as $t\to 0^+$.
Let $h$ be as in part a) of Lemma \ref{defapprox}.  Then
$$\lim_{t\to 0^+}\frac{\log{\lambda A_t}}{\log h (t)}=1.$$

\end{proposition}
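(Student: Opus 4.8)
The plan is to pass to the saturated model $\mathcal{R}$ and to evaluate the definable function
$\psi(t) := \frac{\log \lambda A_t}{\log h(t)}$ at an infinitesimal $\epsilon \in \ma^{>}$. Since $\lambda A_t$ and $h$ are $R$-definable and the logarithm is definable in the o-minimal structure $R_{log}$, the function $\psi$ is definable in (an elementary extension of) $R_{log}$; hence $\lim_{t\to 0^+}\psi(t)$ exists by the monotonicity theorem and equals $\st(\psi(\epsilon))$ for every $\epsilon \in \ma^{>}$, once we know $\psi(\epsilon)$ is bounded. So it suffices to show $\st(\psi(\epsilon)) = 1$ for all $\epsilon \in \ma^{>}$. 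Fix such an $\epsilon$. Because $\lambda A_t \to 0$ as $t \to 0^+$, the value $\lambda A_\epsilon$ is a positive infinitesimal, and since $h(\epsilon)$ is the volume of an inner approximation of $A_\epsilon$ I have $0 < h(\epsilon) \leq \lambda A_\epsilon < 1$; in particular both logarithms are negative infinite. The upper bound is then immediate: from $h(\epsilon) \leq \lambda A_\epsilon$ and $\log h(\epsilon) < 0$ one gets $\psi(\epsilon) \leq 1$.

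For the lower bound I would fix a rational $p$ with $0 < p < 1$ and apply Lemma~\ref{defapprox}(b) with the comparison function $G(t) := h(t)^p$. First I must check its hypothesis $\nu A_t < \widetilde{G(t)}$ for all $t \in \ma^{>}$. Indeed $\nu A_t = \widetilde{h(t)}$ by Lemma~\ref{defapprox}(a), and since $h(t) \in \ma^{>}$ is infinitesimal we have $h(t) < h(t)^p$; moreover $h(t) \not\sim h(t)^p$, because choosing a rational $q$ slightly larger than $1$ with $pq < 1$ gives $(h(t)^p)^q = h(t)^{pq} > h(t)$, which violates the defining condition of $\sim$ in Definition~\ref{eqdef}. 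Hence $\widetilde{h(t)} < \widetilde{h(t)^p} = \widetilde{G(t)}$.

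Applying Lemma~\ref{defapprox}(b) then yields a definable $H$ such that each $H(t)$ is the volume of an outer approximation of $A_t$ and $\nu A_t \leq \widetilde{H(t)} < \widetilde{G(t)}$ for all $t \in \ma^{>}$. Since the $\sim$-classes are order-convex and the order on $\widetilde{\mathcal{O}}$ is induced from $\mathcal{R}$, the strict inequality $\widetilde{H(\epsilon)} < \widetilde{h(\epsilon)^p}$ forces $H(\epsilon) < h(\epsilon)^p$ in $\mathcal{R}$; and $\lambda A_\epsilon \leq H(\epsilon)$ because an outer approximation covers $A_\epsilon$ up to a set of lower dimension. Combining, $\log \lambda A_\epsilon \leq \log H(\epsilon) < p\,\log h(\epsilon)$, and dividing by the negative quantity $\log h(\epsilon)$ gives $\psi(\epsilon) > p$. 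As $p \in (0,1)\cap\mathbb{Q}$ was arbitrary and $\psi(\epsilon) \leq 1$, this squeezes $\st(\psi(\epsilon)) = 1$, and therefore $\lim_{t\to 0^+}\psi(t) = 1$.

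I expect the main obstacle to be the middle step: recognizing that the right comparison function is $G = h^p$ and then transferring the strict inequality $\widetilde{h} < \widetilde{h^p}$ in the semiring $\widetilde{\mathcal{O}}$ — whose addition and order live on the logarithmic scale — into the genuine inequality $H(\epsilon) < h(\epsilon)^p$ in $\mathcal{R}$ that is needed to compare the logarithms. Everything else reduces to keeping track of the signs of the (negative infinite) logarithms and to the routine passage from infinitesimal values to one-sided limits afforded by o-minimality of $R_{log}$.
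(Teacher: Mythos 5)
Your core mechanism---invoking Lemma~\ref{defapprox}(b) with the comparison function $G=h^p$, verifying $\widetilde{h(t)}<\widetilde{h(t)^p}$ from the definition of $\sim$, and squeezing $\log \lambda A_t$ between $\log h(t)$ and $p\log h(t)$---is exactly the paper's argument (the paper also introduces a $q>1$ on the lower side, but your observation that $h(t)\leq \lambda A_t$ already forces the ratio to be $\leq 1$ is fine). However, the frame you put around it contains a genuine gap: you treat $t\mapsto \lambda A_t$ as an $R$- (or $R_{log}$-) definable function, extend it to $\mathcal{R}$, and evaluate it at an infinitesimal $\epsilon$. For $n\geq 2$ the Lebesgue volume of a definable family is in general \emph{not} a definable function of the parameter---this is precisely why Section~\ref{section:ms} builds the measure $\nu$ and the definable approximation volumes $h$ and $H$ rather than feeding $\lambda A_t$ directly into Proposition~\ref{proposition:millerconseq}---and $\lambda A_\epsilon$ for $\epsilon\in\ma^{>}$ is not a meaningful object at all, since Lebesgue measure does not apply to subsets of $\mathcal{R}^n$. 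Consequently your appeal to the monotonicity theorem for $\psi$, the assertion that ``$\lambda A_\epsilon$ is a positive infinitesimal,'' and the inequalities $h(\epsilon)\leq \lambda A_\epsilon\leq H(\epsilon)$ are all unjustified as written.

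The repair is exactly the step the paper takes and your write-up skips: evaluate $\lambda$ only at real $t$, where $h(t)\leq \lambda A_t\leq H(t)$ holds because $h(t)$ and $H(t)$ are volumes of inner and outer approximations of $A_t$; and transfer the purely $R$-definable inequality $H(t)<h(t)^p$ from ``all $t\in\ma^{>}$'' to ``all $t$ in some real interval $(0,b)$'' by o-minimality, since the set on which an $R$-definable inequality holds is a finite union of intervals and contains $\ma^{>}$. This yields $h(t)\leq \lambda A_t < h(t)^{p}$ for all sufficiently small real $t$, whence $p<\frac{\log \lambda A_t}{\log h(t)}\leq 1$ there, and letting $p\to 1^{-}$ completes the squeeze without ever evaluating $\lambda$ off the reals.
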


\begin{proof}
Let $p,q \in \mathbb{Q}^{>}$ with $p<1<q$.  Then, for $t\in \ma^{>}$,
$$\widetilde{h (t )}^q < \widetilde{h (t )} < \widetilde{h (t )}^p .$$  By part b) of Lemma \ref{defapprox}, there is a definable $H\colon (0,a) \to [0,1]$, where $a\in \mathbb{R}^{>}$, so that, on $\ma^{>}$,
$$\widetilde{h (t)}^q <\widetilde{h (t )} \leqslant \widetilde{H (t )} < \widetilde{h (t )}^p ,$$ and $H(t)$ is the volume of an upper approximation of $A_t$ for all $t \in (0,a)$.
So, by Lemma \ref{defapprox}, $$h (t) \leqslant \lambda A_t \leqslant H (t)$$ for all $t\in (0,a)$, and hence we get 
$$h (t)^{q}<\lambda A_t < h (t)^{p}$$ for all sufficiently small $t\in \mathbb{R}^>$.  It follows that $$q < \lim_{t \to 0^+} \frac{\log \lambda A_t }{\log{h(t) }} <p.$$
\end{proof}

\begin{corollary}\label{def}
Let $\{A_{p,t} : p \in \mathbb{R}^l, t \in \mathbb{R}^>\}$ be a definable family of open subsets of $\mathbb{R}^n$ such that $\lim_{t \to 0^+} \lambda A_{p,t} = 0$ for all $p \in \mathbb{R}^l$.
Then $$F(p):=\lim_{t\to 0^+}\frac{\log \lambda A_{p,t}}{\log t} \in \mathbb{R}_{\infty}$$ is a definable function of $p$ taking values in $\Lambda_{\infty }$.
If $R$ is polynomially bounded then $F$ takes only finitely many values.
\end{corollary}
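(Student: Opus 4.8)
The plan is to reduce the statement to Proposition~\ref{proposition:millerconseq} by replacing the (possibly non-definable) Lebesgue measure $\lambda A_{p,t}$ with the definable box-volume approximation furnished by Lemma~\ref{defapprox}, using Proposition~\ref{limitequals1} to absorb the logarithmic discrepancy between the two. The point is that $h_p(t)$, being a volume of boxes, is definable by construction, whereas $\lambda A_{p,t}$ need not be a definable function of $(p,t)$ in a general o-minimal expansion.

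First I would run the construction in the proof of Lemma~\ref{defapprox}(a) uniformly in the parameter $p$. Taking a cell decomposition of $\{(p,t,x) : x \in A_{p,t}\} \subseteq \mathbb{R}^l \times \mathbb{R}^> \times \mathbb{R}^n$ in the family and selecting, for each $p$, the open cells lying over an initial interval $(0,a_p)$ exactly as in that proof, yields a definable function $h$ such that for every fixed $p$ the function $h_p(t) := h(p,t)$ is the volume of an inner approximation of $A_{p,t}$ and satisfies $\widetilde{h_p(t)} = \nu(A_{p,t})$ for all $t \in \ma^{>}$. Extending each $h_p$ to a definable function $\mathbb{R}^> \to \mathbb{R}^>$ by any positive convention on $[a_p,\infty)$ (which does not affect the behaviour as $t \to 0^+$), I obtain a definable family $\{ h_p : p \in \mathbb{R}^l \}$. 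For each fixed $p$, Proposition~\ref{limitequals1} applied to the one-parameter family $\{ A_{p,t} : t \in \mathbb{R}^> \}$ and its function $h_p$ gives $\lim_{t\to 0^+} \frac{\log \lambda A_{p,t}}{\log h_p(t)} = 1$. Applying Proposition~\ref{proposition:millerconseq} to the definable family $\{h_p\}$ shows that $G(p) := \lim_{t\to 0^+}\frac{\log h_p(t)}{\log t}$ is a definable function of $p$ with values in $\Lambda_\infty$, taking only finitely many values when $R$ is polynomially bounded. Writing
$$ \frac{\log \lambda A_{p,t}}{\log t} = \frac{\log \lambda A_{p,t}}{\log h_p(t)} \cdot \frac{\log h_p(t)}{\log t} $$
and letting $t \to 0^+$, the first factor tends to $1$ and the second to $G(p)$ (with the evident reading when $G(p) = \infty$), so the limit defining $F(p)$ exists and $F = G$. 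Hence $F$ inherits definability, membership in $\Lambda_\infty$, and the finiteness of its value set in the polynomially bounded case.

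The main obstacle is the very first step: checking that the somewhat intricate construction in Lemma~\ref{defapprox}(a) goes through definably in the parameter $p$, so that $\{h_p\}$ really is a definable family to which Proposition~\ref{proposition:millerconseq} applies. Concretely this means invoking parametrized cell decomposition and verifying that the choice of cells over $(0,a_p)$ and the supremum defining $h_p(t)$ depend definably on $p$; note that the error-control estimate of Proposition~\ref{limitequals1} is needed only pointwise in $p$ and so requires no uniformity at all. One small locus must be handled separately: where $A_{p,t}$ is empty for all small $t$ one has $\lambda A_{p,t} = 0$ and hence $F(p) = \infty$; this locus is definable and the conclusion holds there trivially.
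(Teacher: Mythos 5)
Your proposal is correct and follows essentially the same route as the paper: construct the definable family $\{h_p\}$ from Lemma~\ref{defapprox}(a) uniformly in $p$ (the paper simply asserts this uniformity, which you justify via parametrized cell decomposition), use Proposition~\ref{limitequals1} pointwise to replace $\log \lambda A_{p,t}$ by $\log h_p(t)$ in the limit, and finish with Proposition~\ref{proposition:millerconseq}. Your extra care about the locus where $A_{p,t}$ is eventually empty is a reasonable addition but does not change the argument.
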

\begin{proof}
For each $p \in \mathbb{R}^l$, let $h_p$ be the function whose existence is guaranteed in part a) of Lemma~\ref{defapprox}, considered as a function $\mathbb{R}^> \to \mathbb{R}^>$.
Note that $h_p$ is uniformly definable in $p$.
Proposition~\ref{limitequals1} shows that
$$ \lim_{t \to 0^+} \frac{ \log \lambda A_{p,t} }{ \log h_p(t)} = 1 \quad \text{for all }p \in \mathbb{R}^l. $$
This implies
$$\lim_{t\to 0^+} \frac{\log \lambda A_{p,t}}{\log t} = \lim_{t\to 0^+}\frac{\log h_p (t)}{\log t} \quad \text{for all }p \in \mathbb{R}^l.$$
The Corollary now follows by an application of Proposition~\ref{proposition:millerconseq}.
\end{proof}




\begin{proposition}\label{keyf}
Let $\mathcal{A}=\{A_{p,t} \colon (p,t) \in \mathbb{R}^k \times \mathbb{R}^> \}$ be a definable family of $d$-dimensional subsets of $\mathbb{R}^n$, where $d<n$ and $\mu_d A_{p,t} \to 0$ as $t\to 0+$ for each $p\in \mathbb{R}^k$.  Then $$\lim_{t\to 0} \frac{\log \mu_d A_{p,t}}{\log t}$$ is a definable function of $p$ taking values in $\Lambda_{\infty }$.  If $R$ is polynomially bounded, then this function takes only finitely many values.
\end{proposition}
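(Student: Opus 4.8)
The plan is to reduce the proposition to the case $d=n$, which is precisely Corollary~\ref{def}, by projecting each $A_{p,t}$ onto a suitable $d$-dimensional coordinate subspace. The mechanism is that $\mu_d$ of a $d$-dimensional $L$-cell is comparable, up to a multiplicative constant depending only on $n$ and $L$, to the ordinary $d$-dimensional Lebesgue measure of its projection onto the first $d$ coordinates. Such multiplicative constants disappear after one divides $\log$ by $\log t$, so the exponent governing $\mu_d A_{p,t}$ is determined by the exponents of these projections, to which Corollary~\ref{def} applies directly.

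First I would produce a parametrized version of Corollary~\ref{pawluckicor}. Applying definable cell decomposition to the family $\{(p,t,x) : x \in A_{p,t}\} \subseteq \mathbb{R}^k \times \mathbb{R}^> \times \mathbb{R}^n$, together with the usual uniform finiteness of o-minimality, one partitions $\mathbb{R}^k$ into finitely many definable pieces on each of which there is a fixed $m$, fixed coordinate permutations $\tau_1,\ldots,\tau_m$, and a single $L \in \mathbb{R}^>$ such that, for every $(p,t)$ in the piece, the cells $S^{p,t}_1,\ldots,S^{p,t}_m \subseteq A_{p,t}$ form an almost partition of $A_{p,t}$ with each $\tau_i S^{p,t}_i$ an $L$-cell of dimension $d$. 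Since $F$ is definable as soon as it is definable on each piece, and since finitely many pieces contribute only finitely many values, it suffices to fix one such piece and work there.

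Write $P^{p,t}_i := p^{n}_{d}\tau_i S^{p,t}_i$, an open standard $L$-cell in $\mathbb{R}^d$. As $\tau_i S^{p,t}_i$ is the graph over $P^{p,t}_i$ of $L$-controlled functions, it is the image of a parametrization $\psi^{p,t}_i \colon P^{p,t}_i \to \tau_i S^{p,t}_i$ sending $x$ to the corresponding graph point; the chain rule bounds the partial derivatives of $\psi^{p,t}_i$ in terms of $n$ and $L$ alone, so the area-formula integrand $\sqrt{\det(I_d + (D\psi^{p,t}_i)^{\mathsf T} D\psi^{p,t}_i)}$ lies between $1$ and a constant $C=C(n,L)$ almost everywhere. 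Since $\tau_i$ is a coordinate permutation it preserves $d$-dimensional measure, so integrating over $P^{p,t}_i$ yields, with $\lambda$ now denoting $d$-dimensional Lebesgue measure on $\mathbb{R}^d$,
$$\lambda(P^{p,t}_i) \leqslant \mu_d(S^{p,t}_i) \leqslant C\,\lambda(P^{p,t}_i).$$
Summing over the disjoint cells and discarding the $\mu_d$-null remainder $A_{p,t}\setminus\bigcup_i S^{p,t}_i$ (of dimension $<d$) gives, with $M_{p,t} := \max_i \lambda(P^{p,t}_i)$,
$$M_{p,t} \leqslant \mu_d(A_{p,t}) \leqslant m\,C\,M_{p,t}.$$
In particular each $\lambda(P^{p,t}_i) \leqslant \mu_d(A_{p,t}) \to 0$ as $t\to 0^+$, so the definable families $\{P^{p,t}_i\}$ of open subsets of $\mathbb{R}^d$ satisfy the hypotheses of Corollary~\ref{def} (with $n$ replaced by $d$); hence each $F_i(p) := \lim_{t\to 0^+}\frac{\log \lambda(P^{p,t}_i)}{\log t} \in \Lambda_\infty$ is definable and takes only finitely many values when $R$ is polynomially bounded.

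It remains to pass to the limit. Taking logarithms in the sandwich for $\mu_d A_{p,t}$, dividing by $\log t < 0$, and using $\log(mC)/\log t \to 0$, one gets $\lim_{t\to 0^+}\frac{\log \mu_d A_{p,t}}{\log t} = \lim_{t\to 0^+}\frac{\log M_{p,t}}{\log t}$ whenever the latter exists. Since $\log M_{p,t} = \max_i \log \lambda(P^{p,t}_i)$ and division by the negative quantity $\log t$ reverses the maximum into a minimum, this equals $\lim_{t\to 0^+}\min_i \frac{\log \lambda(P^{p,t}_i)}{\log t} = \min_i F_i(p)$, the interchange of limit and finite minimum being valid because each $F_i(p)$ exists. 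Thus $F(p) = \min_i F_i(p)$ on the fixed piece, which is definable, $\Lambda_\infty$-valued, and takes finitely many values in the polynomially bounded case; assembling the finitely many pieces proves the proposition. The only real obstacle I anticipate is the first step, namely extracting from Paw\l ucki's theorem a decomposition that is uniform across the family (a single combinatorial type, fixed permutations, and a single $L$ valid throughout a piece) and checking that the area-formula constant $C$ can likewise be taken uniform in $(p,t)$; everything downstream is routine once this uniformity is secured.
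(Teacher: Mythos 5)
Your proposal is correct and follows essentially the same route as the paper: a uniform (in $(p,t)$, via compactness) Paw\l ucki decomposition into $L$-cells, projection onto the first $d$ coordinates after a permutation, reduction to the open case in $\mathbb{R}^d$ handled by Corollary~\ref{def}, and finally Proposition~\ref{proposition:millerconseq}. The only place you diverge is presentational but welcome: your area-formula sandwich $\lambda(P^{p,t}_i)\leqslant \mu_d(S^{p,t}_i)\leqslant C\,\lambda(P^{p,t}_i)$ makes explicit over $\mathbb{R}$ the comparison between $\mu_d A_{p,t}$ and $\max_i \lambda\big(p^{n}_{d}\tau_i S^{p,t}_i\big)$ that the paper leaves implicit in its passage from $\mu_d$ to the nonstandard measure $\nu_d$.
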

\begin{proof}
By Proposition~\ref{proposition:millerconseq}, it suffices to find a definable function $f\colon \mathbb{R}^{k} \times \mathbb{R}^>\to \mathbb{R}$ such that 
$$\lim_{t\to 0+}\frac{\log{\nu_{d}A_{p,t}}}{\log t}=\lim_{t\to 0+}\frac{\log{f(p,t)}}{\log t} \quad \text{for all } p \in \mathbb{R}^k.$$
Let $L$ be the constant corresponding to $n$ from Theorem \ref{pawlucki}.
We shall say that a collection of definable sets $\{ C_i  \}$ and a collection of permutations of coordinates $\{ \tau_i \}$ of $\mathbb{R}^n$ are {\em good for $A \subseteq \mathbb{R}^n$\/} if $\{ C_{i}\}$ is an almost partition of $A$ and each $\tau_i C_i$ is an $L$-cell of dimension $d$. 

Corollary \ref{pawluckicor} yields, for each $(p,t) \in \mathbb{R}^{k+1}$, collections $\{ C_i  \}$ and $\{ \tau_i \}$ good for $A_{p,t}$.
Being an almost partition of $A_{p,t}$ as well as being an $L$-cell are first-order properties, hence compactness and the proof of Theorem \ref{pawlucki} yield finitely many collections $$\{ C_{1,1}(p,t), \dots ,C_{1,l(1)} \}, \dots ,\{ C_{m,1}(p,t), \dots ,C_{m,l(m)} \}, $$ of finitely many families of sets $C_{ij}(p,t) \subseteq \mathbb{R}^n$ defined over the same parameters as $\mathcal{A}$ such that for each $(p,t) \in R^{k+1}$ there is $i \in \{ 1, \dots , m \}$ so that $\{ C_{ij}(p,t)\}$ and some set of permutations of coordinates $\{  \tau_j \}$ are good for $A_{p,t}$.

Let $\{ \underline{\tau}_s  \}$ be the set of all the tuples of permutations of coordinates $\mathcal{R}^n \to \mathcal{R}^n$ of length $\max \{ l(1), \dots ,l(m)  \}$ (note that $\{ \underline{\tau}_s \}$ is finite), and let $(\underline{\tau_s } )_j$ be the $j$-th coordinate of $\underline{\tau}_s$.
Let, for each $i\in \{ 1,\dots ,m  \}$ and each $s$, $h_{is}\colon \mathcal{R}^{k+1}\to \mathcal{R}$ be the definable function such that $\widetilde{h_i (p,t)}=\max_{j} \{  \nu p^{n}_{d}(\underline{\tau}_s )_j C_{ij}(p,t) \}$ and whose existence was proved in Lemma \ref{defapprox}.

Then the function $f\colon \mathbb{R}^{k+1}\to \mathbb{R}$ which assigns to $(p,t)$ the value of $h_{is}$ at $(p,t)$, where $is$ is the smallest number in lexicographic order such that $\{ C_{ij}(p,t) \}$ and $\underline{\tau}_s $ are good for $A_{p,t}$, is as required.

\end{proof}

\end{section}

\begin{section}{Proof of Theorem~\ref{theorem:main}}
Let $\mathcal{X} = \{ (X_\alpha,d_\alpha) : \alpha \in \bR^l \}$ be a definable family of metric spaces.
We prove the following:
\begin{theoremno}
The Hausdorff dimension of $(X_\alpha, d_\alpha)$ is a definable function of $\alpha$ which takes values in $\Lambda_\infty$.
If $R$ is polynomially bounded, then the Hausdorff dimension of the elements of $\mathcal{X}$ takes only finitely many values.
\end{theoremno}

\begin{proof}
Applying Corollary 9.3.4 of \cite{erikthesis} there is a partition of $\bR^l$ into definable sets $A,B$, a definable family of sets $\{ Z_\alpha: \alpha \in A\}$ and a definable family of functions $\{ h_\alpha: \alpha \in A\}$ such that:
\begin{enumerate}
\item $h_\alpha$ is a homeomorphism $(X_\alpha, d_\alpha) \rightarrow (Z_\alpha, e)$ for all $\alpha \in A$,
\item if $\beta \in B$, then there is an infinite definable $A \subseteq X_\beta$ such that $(A, d_\beta)$ is discrete.
\end{enumerate}
Any infinite definable set has cardinality $|\bR|$.
Thus if $\beta \in B$, then $(X_\beta, d_\beta)$ contains a discrete subspace of cardinality $|\bR|$ and is therefore not separable.
Thus Fact~\ref{fact:separable} implies $\hdim(X_\beta, d_\beta) = \infty$ for all $\beta \in B$.
We therefore assume that $B$ is empty.
For all $\alpha \in \bR^l$,
we let $d'_\alpha$ be the metric on $Z_\alpha$ given by
$$ d'_\alpha(h_\alpha(x), h_\alpha(y)) = d_\alpha(x,y) \quad \text{for all } x,y \in X_\alpha. $$
Then $(X_\alpha, d_\alpha)$ is isometric to $(Z_\alpha, d'_\alpha)$ for all $\alpha \in \bR^l$.
Note also that $id : (Z_\alpha, d'_\alpha) \rightarrow (Z_\alpha, e)$ is a homeomorphism for all $\alpha \in \bR^l$.
It suffices to prove the theorem for the family $\{ (Z_\alpha, d'_\alpha) : \alpha \in \bR^l\}$.
We therefore prove the theorem under the assumption that the topologies given by $d_\alpha$ and $e$ agree on $X_\alpha$ for all $\alpha \in \bR^l$.
Applying the Trivialization Theorem there are a partition $\{F_1,\ldots,F_n\}$ of $\bR^l$ into definable sets, definable sets $X_1,\ldots, X_n$, and a definable family of functions $\{g_\alpha: \alpha \in \bR^l\}$ such that $g_\alpha : (X_\alpha, e) \rightarrow (X_i, e)$ is a homeomorphism for all $\alpha \in F_i$.
For all $1 \leqslant i \leqslant n$ and $\alpha \in F_i$ we let $d'_\alpha$ be the metric on $X_i$ given by
$$ d'_\alpha(g_\alpha(x), g_\alpha(y)) = d_\alpha(x,y) \quad \text{for all } x,y \in X_\alpha. $$
It suffices to prove the theorem for each definable family $\{ (X_i, d'_\alpha): \alpha \in F_i\}$ separately.
Therefore we assume that $\mathcal{X} = \{ (X, d_\alpha) : \alpha \in \bR^l \}$ for some definable set $X$, and suppose the topology given by $d_\alpha$ agrees with the usual euclidean topology on $X$ for all $\alpha$.
We let $k = \dim(X)$.

We apply induction to $k$.
If $k = 0$, then $X$ is finite and so $\hdim(X, d_\alpha) =0$.
Suppose $k \geqslant 1$.
Let $\lambda$ be the $k$-dimensional Lebesgue measure on $X$.
We let $B_\alpha(p,t)$ be the open $d_\alpha$-ball with center $p \in X$ and radius $t$ and let $B_e(p,t)$ be the open euclidean ball in $\bR^l$ with center $p$ and radius $t$.
Fix $\alpha \in \bR^l$ and $p \in X$.
For all $\delta \in \bR^>$ there is an $\epsilon \in \bR^>$ such that $B_\alpha(p,\epsilon) \subseteq B_e(p,\delta)$.
As $\lambda[ B_e(p,t) \cap X] \to 0$ as $t \to 0^+$ we have $\lambda [B_\alpha(p,t)] \to 0$ as $t \to 0^+$.
Applying Proposition~\ref{keyf} we get a definable function $F: \bR^l \times X \rightarrow \bR_\infty$ such that
$$ F(\alpha,p) = \lim_{t \to 0^+} \frac{ \log \lambda [B_\alpha(p,t)] }{ \log(t) } \quad \text{for all } \alpha \in \bR^l,p \in X. $$
For all $\alpha \in \bR^l$ we let $F_\alpha: X \rightarrow \bR_\infty$ be given by $F_\alpha(p) = F(\alpha, p)$.
Then $F$ takes values in the field of powers of $R$ and if $R$ is polynomially bounded, then $F$ takes only finitely many values.
We let $\{ U_\alpha : \alpha \in \bR^l\}$ be a definable family of open subsets of $X$ such that for all $\alpha \in \bR^l$:
\begin{enumerate}
\item the restriction of $F_{\alpha}$ to $U_\alpha$ is continuous,
\item $\dim(X \setminus U_\alpha) < k$,
\item every open subset of $U_\alpha$ is $k$-dimensional.
\end{enumerate}
By Fact~\ref{fact:union}:
$$ \hdim(X_\alpha, d_\alpha) = \max\{ \hdim(U_\alpha, d_\alpha), \hdim(X_\alpha \setminus U_\alpha, d_\alpha)\} \quad \text{for all } \alpha \in \bR^l. $$
The inductive assumption gives the theorem for $\{(X_\alpha \setminus U_\alpha, d_\alpha) : \alpha \in \bR^l\}$.
It therefore suffices to prove the theorem for $\{ (U_\alpha, d_\alpha): \alpha \in \bR^l\}$.
By 2. above we have $\lambda [B_\alpha(p,t)] > 0$ for all $p \in U_\alpha$ and $t \in \bR^>$.
Thus Proposition~\ref{proposition:logmassdistro} implies
$$ \hdim( U_\alpha, d_\alpha) = \sup \{ F_\alpha(p) : p \in U_\alpha \} \quad \text{for all } \alpha \in \bR^l. $$
Therefore $\hdim(U_\alpha, d_\alpha)$ is a definable function of $\alpha$.
If $R$ is polynomially bounded, then $F$ takes only finitely many values, all in $\Lambda_\infty$, and so $\hdim(U_\alpha,d_\alpha)$ takes only finitely many values as $\alpha$ varies, and each value is an element of $\Lambda_\infty$
\end{proof}
\end{section}

\begin{section}{Bilipschitz Equivalence}
In \cite{valette} Valette considered definable sets equipped with their induced euclidean metrics, and classified them up to bilipschitz equivalence.
He proved the following:
\begin{theoremno}
There are $|\Lambda|$-many definable sets up to definable bilipschitz equivalence.
If $R$ is polynomially bounded, then a definable family of sets has only finitely many elements up to definable bilipschitz equivalence.
\end{theoremno}
One might speculate that Theorem~\ref{theorem:main} is a consequence of a generalization of Valette's theorem to definable metric spaces.
This is not the case:
\begin{fact}
There is a semialgebraic family of metric spaces which contains continuum many elements up to bilipschitz equivalence.
\end{fact}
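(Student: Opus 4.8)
The plan is to exhibit the required family among Carnot groups, using the rigidity theorem of Pansu to convert the fine problem of bilipschitz classification into the purely algebraic problem of isomorphism of graded nilpotent Lie algebras, for which a continuum of classes is classical. Concretely, I would take a semialgebraic one-parameter family of step-two Carnot groups whose Lie algebras are pairwise non-isomorphic, equip each with a semialgebraic left-invariant homogeneous gauge metric exactly as the Heisenberg metric is defined in the introduction, and argue that a bilipschitz equivalence would force an isomorphism of the associated graded Lie algebras.

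First I would produce the algebraic moduli. A step-two nilpotent Lie algebra with first layer $V=\mathbb{R}^{8}$ and center $W=\mathbb{R}^{2}$ is determined by a pencil of alternating forms $(\omega_1,\omega_2)$ on $V$, via $[u,v]=(\omega_1(u,v),\omega_2(u,v))$. Such pencils are classified up to the action of $GL(V)\times GL(W)$ by Kronecker--Weierstrass theory, and for a generic pencil the degeneracy locus $\{\lambda:\omega_1+\lambda\omega_2\text{ is degenerate}\}$ consists of four points of $\mathbb{P}^1$ whose cross-ratio is an isomorphism invariant. I would fix three of the degeneracy values at $0,1,\infty$ and let the fourth be the parameter $\alpha$, obtaining alternating forms $\omega_1,\omega_2(\alpha)$ depending polynomially on $\alpha$, and hence a semialgebraic family of step-two graded Lie algebras $\mathfrak{g}_\alpha=V\oplus W$, pairwise non-isomorphic for distinct $\alpha$ in a suitable interval $I$ (the cross-ratio separating them up to its finitely many symmetries).

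Next I would assemble the metric family. Since each $\mathfrak{g}_\alpha$ is two-step nilpotent, the Baker--Campbell--Hausdorff formula terminates and endows $\mathbb{R}^{10}$ with a polynomial group law $*_\alpha$ giving the simply connected group $G_\alpha$. Grading $V$ in degree one and $W$ in degree two produces dilations $\delta_s$, and I would put on $G_\alpha$ the left-invariant homogeneous gauge metric $d_\alpha(A,B)=\|A^{-1}*_\alpha B\|_\alpha$, where $\|\cdot\|_\alpha$ is a semialgebraic homogeneous norm of the form used in the introduction for $H$ (a large even root of a weighted sum of powers); exactly as there, one checks that for a sufficiently large exponent, chosen uniformly over the compact parameter range, the gauge satisfies the triangle inequality and is a genuine metric. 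The family $\{(G_\alpha,d_\alpha):\alpha\in I\}$ is then semialgebraic, and every member has topological dimension $10$ and Hausdorff dimension equal to the homogeneous dimension $8\cdot 1+2\cdot 2=12$, consistently with Theorem~\ref{theorem:main}.

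Finally I would rule out bilipschitz coincidences. By Pansu's differentiability theorem, a bilipschitz homeomorphism between two Carnot groups is Pansu-differentiable almost everywhere and its differential at a point of differentiability is an isomorphism of the associated graded Lie groups; in particular $(G_\alpha,d_\alpha)$ and $(G_\beta,d_\beta)$ being bilipschitz equivalent forces $\mathfrak{g}_\alpha\cong\mathfrak{g}_\beta$ as graded Lie algebras, hence $\alpha=\beta$ up to the finitely many cross-ratio symmetries. Therefore the uncountable family $\{(G_\alpha,d_\alpha):\alpha\in I\}$ meets each bilipschitz class in a finite set, and so contains continuum many classes. The two substantial inputs, and the place where I expect the real work to sit, are (i) the geometry of pencils of alternating forms needed to guarantee a genuinely continuous, polynomially realized isomorphism invariant, and (ii) the correct invocation of Pansu's rigidity to upgrade bilipschitz equivalence to graded isomorphism; the verification that the homogeneous gauges are honest metrics uniformly in $\alpha$ is routine but must be carried out as in the Heisenberg case of the introduction.
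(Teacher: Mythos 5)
Your proposal is correct and follows essentially the same route as the paper: a semialgebraic family of Carnot metrics, Pansu's rigidity theorem to reduce bilipschitz classification to isomorphism of the underlying Carnot groups, and the existence of continuum many pairwise non-isomorphic Carnot group structures. The only difference is that you make the last ingredient explicit (via pencils of alternating forms and the cross-ratio invariant) where the paper simply cites it as known, which is a welcome but inessential elaboration.
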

The collection of Carnot metrics on $\mathbb{R}^k$ naturally forms a semialgebraic family of metric spaces.
Pansu~\cite{pansu} proved that if two Carnot groups are not isomorphic as groups, then the associated Carnot metrics are not bilipschitz equivalent.
It is known that if $k \geqslant 6$, then there are continuum many pairwise non-isomorphic Carnot group operations on $\mathbb{R}^k$.
See 5.5 of \cite{erikthesis} for details and references.
\end{section}

\end{document}